\renewcommand*{\backrefalt}[4]{%
    \ifcase #1 \footnotesize{(not cited)}%
    \or        \footnotesize{(cited on page~#2)}%
    \else      \footnotesize{(cited on pages~#2)}%
    \fi}
\DeclareMathAlphabet\mathbfcal{OMS}{cmsy}{b}{n}
\newcommand{\BEAS}{\begin{eqnarray*}}
\newcommand{\EEAS}{\end{eqnarray*}}
\newcommand{\BEA}{\begin{eqnarray}}
\newcommand{\EEA}{\end{eqnarray}}
\newcommand{\BEQ}{\begin{equation}}
\newcommand{\EEQ}{\end{equation}}
\newcommand{\BIT}{\begin{itemize}}
\newcommand{\EIT}{\end{itemize}}
\newcommand{\BNUM}{\begin{enumerate}}
\newcommand{\ENUM}{\end{enumerate}}
\newcommand{\BA}{\begin{array}}
\newcommand{\EA}{\end{array}}
\newcommand{\tr}{\mathop{ \rm tr}}
\newcommand{\idm}{I}
\newcommand{\rb}{\mathbb{R}}
\newcommand{\cb}{\mathbb{C}}
\newcommand{\zb}{\mathbb{Z}}
\newcommand{\ds}{\displaystyle }
\newcommand{\BlackBox}{\rule{1.5ex}{1.5ex}}  
\newenvironment{proof}{\par\noindent{\bf Proof\ }}{\hfill\BlackBox\\[2mm]}
\newtheorem{lemma}{Lemma}
\newtheorem{theorem}{Theorem}
\newtheorem{corollary}{Corollary}
\newcommand{\mysec}[1]{Section~\ref{sec:#1}}
\newcommand{\eq}[1]{Eq.~(\ref{eq:#1})}
\title{Exponential convergence of sum-of-squares hierarchies  \\ for trigonometric polynomials}
\author{Francis Bach and Alessandro Rudi\\
Inria,  Ecole Normale Sup\'erieure \\
PSL Research University \\
\url{francis.bach@inria.fr}, \url{alessandro.rudi@inria.fr}}
\date{\today}
\begin{document}
\maketitle
 
 \begin{abstract}
     We consider the unconstrained optimization of multivariate trigonometric polynomials by the sum-of-squares hierarchy of lower bounds. We first show a convergence rate of $O(1/s^2)$ for the relaxation with degree $s$ without any assumption on the trigonometric polynomial to minimize. Second, when the polynomial has a finite number of global minimizers with invertible Hessians at these minimizers, we show an exponential convergence rate with explicit constants. Our results also apply to the minimization of regular multivariate polynomials on the hypercube.
 \end{abstract}

  \section{Introduction}
 
 Sum-of-squares hierarchies provide an elegant framework for global optimization for a variety of hard optimization problems. Starting from continuous polynomial optimization and combinatorial optimization problems~\cite{lasserre2001global,parrilo2003semidefinite}, they now apply to many other infinite-dimensional optimization problems such as optimal transport or optimal control (see a thorough review in~\cite{lasserre2010moments,henrion2020moment}).

 Within optimization, they are often cast as the minimization of multivariate polynomials over sets defined by essentially arbitrary polynomial constraints. They work by solving a sequence of semi-definite programming problems of increasing sizes, often referred to as a sum-of-squares (SOS) ``hierarchy'' of optimization problems.
 
 The convergence rate of the minimal values of these problems towards the optimal value is empirically much faster than can actually be shown. Current theoretical results can be summarized as follows:
 \BIT
 \item In dimension one, there is no need for hierarchies, as the most direct formulations are tight~\cite{nesterov2000squared}.
 \item In higher dimensions, {under mild assumptions}, the hierarchies are always converging, due to powerful representation results of strictly positive polynomials~\cite{putinar1993positive,schmudgen2017moment}. However, finite convergence can only be shown when strict second-order local optimality conditions are satisfied, but without a bound on the level at which the finite convergence is achieved~\cite{nie2014optimality}. Similar finite convergence results may be obtained in other situations, such as convexity~\cite{lasserre2009convexity,de2011lasserre}.
 \item In terms of asymptotic convergence rates (in dimension greater than one), they are quite slow, at best $O(1/s^2)$ in the simplest situations for the relaxation with polynomials of degree $s$~\cite{fang2021sum,laurent2022effective,slot2022sumCD}, {already improving on more generic results with rates in $O(1/s^c)$ for an unspecified value of $c$~\cite{schweighofer2004complexity,baldi2022effective,baldi2022ojasiewicz}}.
 \EIT
{Hierarchies for polynomial optimization come in two main types, using two different representations for non-negative polynomials under polynomial constraints. The ``Putinar representation'' adds as many polynomials as the number of constraints~\cite{putinar1993positive}, while the ``Schm\"udgen representation'' adds an exponential number~\cite{schmudgen2017moment}}.  In this paper, we focus on one of the simplest formulations of minimizing polynomials on $[-1,1]^d$ {with the Schm\"udgen representation}, which, as we show below through the use of Chebyshev polynomials, can be formulated as minimizing specific instances of trigonometric polynomials on $[0,1]^d$, which will be our primary focus, since for unconstrained optimization of trigonometric polynomials, most results simplify.
 
 We make the following contributions:
 \BIT
 \item We provide in \mysec{nocond} an $O(1/s^2)$ convergence result for the level of the hierarchy corresponding to trigonometric polynomials of degree $s$, \emph{without any assumptions}, that extends the work of~\cite{laurent2022effective} for polynomials on $[-1,1]^d$, with a similar proof technique (taken from~\cite{fang2021sum}), but with simpler arguments and explicit constants.
 \item When we add local optimality conditions similar to~\cite{nie2014optimality}, we prove in \mysec{withcond} an exponential convergence rate with explicit (but more complex) constants. The proof technique is taken from~\cite{rudi2020finding,woodworth2022non}, who showed convergence rates faster than any polynomial in $s$, but without explicit constants.
 \EIT
 
Our proof techniques deviate from previous work on polynomial hierarchies by focusing on the smoothness properties of the optimization problems rather than their algebraic properties. More precisely, this allows us (1) to use square roots and matrix square roots (which will typically lead to non-polynomial functions when taken on polynomials) together with their differentiability properties, and (2) to consider all infinitely differentiable functions with specific control of all derivatives, which trigonometric polynomials are only a sub-class of.

\section{Problem set-up}

\paragraph{Periodic functions and trigonometric polynomials}
 We consider $1$-periodic continuous functions $f$ on $\rb^d$, which we restrict to  $f: [0,1]^d \to \rb$, with summable Fourier series, that is, for which the ``F-norm'':
 \[
 \| f\|_{\rm F} = \sum_{\omega \in \zb^d} | \hat{f}(\omega)|
 \]
 is finite, where $\ds \hat{f}(\omega) = \int_{[0,1]^d} f(x) e^{-2i \pi \omega^\top x} dx $ is the Fourier series of $f$. We can then represent such functions as sums of complex exponentials $f(x) = \sum_{\omega \in \zb^d} \hat{f}(\omega) e^{2i\pi \omega^\top x}$, where the series is uniformly convergent. A key property of the F-norm is its relationship with the $L_\infty$-norm, that is, $\| f\|_\infty \leqslant \|f\|_{\rm F}$.
 
 We consider real-valued functions $f$, that is, such that $\hat{f}(-\omega) = \hat{f}^\ast(\omega)$ for all $\omega \in \zb^d$. This implies we can write $f(x)$ as real linear combinations of $\cos 2\pi \omega^\top x$ and $\sin 2\pi \omega^\top x$, and thus as a linear combination of monomials in $\cos 2\pi x_1,\dots,\cos 2\pi x_d,$ $ \sin 2\pi x_1,\dots,\sin 2\pi x_d$. This includes, but is not limited to,  trigonometric polynomials of degree $2r$, which corresponds to functions with vanishing Fourier series coefficients $\hat{f}(\omega)$ for $\|\omega\|_\infty > 2r$, that is,
 \[
 f(x) = \sum_{\| \omega \|_\infty  \leqslant 2r} \hat{f}(\omega) e^{2i\pi \omega^\top x}.
 \]
{We denote by $x_\ast$ any minimizer of $f$ on $[0,1]^d$ and by $f_\ast$ the minimal value (which does not depend on the chosen minimizer).}
\paragraph{Hierarchies of SOS optimization problems}
 We consider the maximization of $c$ such that $f-c$ is a sum of squares of trigonometric polynomials of degree $s$. We denote the optimal value by $c_\ast(f,s)$. The principle behind SOS hierarchies is that when $f$ is a trigonometric polynomial, this optimization problem can be solved as a finite-dimensional semi-definite programming (SDP) problem that we describe in \mysec{sdp} and thus be solved with a variety of algorithms~(see, e.g.,~\cite{helmberg1996interior}).
 
 If $f$ is a trigonometric polynomial of degree $2r$ with  $r \leqslant s$, then the value is finite, and we always have $c_\ast(f,s) \leqslant  \inf_{x \in [0,1]^d} f(x) = f_\ast$. Our main goal is to provide a bound:
 \BEQ
 \label{eq:EPS}
0 \leqslant  \inf_{x \in [0,1]^d} f(x) -c_\ast(f,s)  \leqslant \varepsilon(f,s),
 \EEQ
 depending on simple properties of $f$, and that tends to zero when $s$ tends to $+\infty$ with an explicit dependence in $s$.

 \paragraph{Beyond polynomials}
 When $f$ is not a trigonometric polynomial (of sufficiently low degree), then the SDP is not feasible (and the value thus equal to $-\infty$), but as shown in~\cite{woodworth2022non}, by using $c - \| f - c - g\|_{\rm F}$ as an objective function (with $g$ an SOS trigonometric polynomials of degree $2s$),  we always get feasible problems with values less than the minimal value of $f$. They can then be solved with appropriate sampling schemes (see~\cite{woodworth2022non} for details).

 \subsection{Semidefinite programming formulations}
 \label{sec:sdp}
In this section, we provide an explicit description of the semi-definite program for the SOS relaxation, as well as the associated spectral relaxation. For trigonometric polynomials, the optimization problems can be compactly written.

For an integer $s$, we consider the feature map $\varphi:[0,1]^d \to \cb^{(2s+1)^d}$, indexed by $\omega \in \{-s,\dots,s\}^d$ with values:
\BEQ
\label{eq:phi}
\varphi_\omega(x) = \frac{1}{(2s+1)^{d/2}} \exp( 2i \pi \omega^\top x).
\EEQ
It satisfies $\| \varphi(x) \|=1$ for all $x \in [0,1]^d$, where $\| \cdot\|$ denotes the standard Hermitian norm.

We can represent any trigonometric polynomial of degree $2s$ as a quadratic form in $\varphi(x)$, that is, we can write $f$ (non-uniquely) as $f(x) = \varphi(x)^\ast F \varphi(x)$, where $F$ is a Hermitian matrix of dimension $(2s+1)^d \times (2s+1)^d$. We denote by $\mathcal{V}_s$ the set of multivariate Hermitian Toeplitz matrices in dimension $(2s+1)^d \times (2s+1)^d$, that is, Hermitian matrices $\Sigma$ such that $\Sigma_{\omega \omega'}$ depends only $\omega-\omega' \in \zb^d$. It turns out that the span of all matrices $\varphi(x) \varphi(x)^\ast$ for $x \in [0,1]^d$ is exactly $\mathcal{V}_s$. We denote by $\mathcal{V}^\perp_s$ the orthogonal complement of $\mathcal{V}_s$ for the dot-product $(M,N) \mapsto \tr (M^\ast N)$.

\paragraph{Primal-dual formulations}

The SOS relaxation is obtained by solving
\[
 \max_{c \in \rb, \ A \succcurlyeq 0 } \ \ c   \ \  \mbox{ such that }\   \forall x \in [0,1]^d, \ f(x) = c + \varphi(x)^\ast A \varphi(x).
\]
It can be re-written using $\mathcal{V}_s$ as:
\BEA
\notag & &  \max_{c \in \rb, \ A \succcurlyeq 0 } \ \ c  \ \ \mbox{ such that }\ \ \forall x \in [0,1]^d, \ \tr \big[ \varphi(x) \varphi(x)^\ast ( F - c \idm - A) \big] = 0 \\
\notag & = & \max_{c \in \rb, \ A \succcurlyeq 0, \ Y \in   \mathcal{V}_s^\perp } \  c \  \ \mbox{ such that }\ \  F - c \idm - A + Y = 0 
\\
\label{eq:primalSOS} & = & \max_{Y \in  \mathcal{V}_s^\perp }\   \lambda_{\min}(F + Y),
\EEA
whose optimal value is $c_\ast(f,s)$.
Its dual can be written as, using standard semi-definite duality:
\BEA
\notag \max_{Y \in  \mathcal{V}_s^\perp }\   \lambda_{\min}(F + Y)
\notag  & = & \min_{ \Sigma \succcurlyeq 0 } \ 
\max_{Y \in  \mathcal{V}_s^\perp }\   \tr [ \Sigma(F + Y)] \ \mbox{ such that } \ \tr(\Sigma) = 1 \\
\label{eq:dualSOS} & = & \min_{ \Sigma \succcurlyeq 0 } \ 
   \tr ( \Sigma F )\ \mbox{ such that } \ \tr(\Sigma) = 1 , \ \Sigma \in \mathcal{V}_s,
\EEA
which corresponds to an outer approximation of the convex hull of all $\varphi(x)\varphi(x)^\ast$, $x \in [0,1]^d$, by the set of positive semi-definite matrices such that 
$\tr(\Sigma) = 1$ and $\Sigma \in \mathcal{V}_s$.

\paragraph{Spectral relaxation}
We can further relax the problem by equivalently setting $Y=0$ in \eq{primalSOS}, or removing the constraint $\Sigma \in \mathcal{V}_s$ in \eq{dualSOS}, and we simply obtain $\lambda_{\min}(F)$, which is the natural spectral relaxation of the minimization of $\varphi(x)^\ast F \varphi(x)$, by only considering that $\|\varphi(x)\|=1$. {This relaxation is appealing computationally as it can be solved in quadratic time in the dimension of $F$ as opposed to more than cubic for the SDP corresponding to the SOS problem}, but it leads in general to slow rates (see Appendix~\ref{app:spectral}).

\subsection{Relationship with polynomial hierarchies on $[-1,1]^d$}
\label{sec:poly}

In this section, we show how results on trigonometric polynomials on $[0,1]^d$ lead to results on regular polynomials on $[-1,1]^d$.

 Given a real polynomial $P$ on $\rb^d$ of degree $2r$, we define the function $f:[0,1]^d \to \rb$ as 
 \[
 f(y) = P(\cos 2\pi y_1,\dots,\cos 2\pi y_d),\]
  which is a trigonometric polynomial on $[0,1]^d$.
 
If the function $f$ is a sum of squares of trigonometric polynomials, it is the sum 
of terms of the form $\ds \big[ Q(\cos 2\pi y_1,\dots,\cos 2\pi y_d,\sin 2\pi y_1,\dots,\sin 2\pi y_d) \big]^2$, where $Q$ is a regular multivariate polynomial.

We can then use the unique decomposition of multivariate trigonometric polynomials as\footnote{This is a simple consequence of the definitions of Chebyshev polynomials of the first and second kinds~(see, e.g., \cite{dumitrescu2007positive}), that show that for $\omega\geqslant 1$, $\cos 2\pi \omega z$ is a polynomial in $\cos 2\pi z$, while $\sin 2\pi (\omega +1) z$ is the product of $\sin 2\pi z$ and a polynomial in $\cos 2\pi z$.} 
\BEAS
Q(\cos 2\pi y_1,\dots,\cos 2\pi y_d,\sin 2\pi y_1,\dots,\sin 2\pi y_d)  \\
& & \hspace*{-3cm} =  \sum_{J \subset \{1,\dots,d\} } Q_J(\cos 2\pi y_1,\dots,\cos 2\pi y_d) \prod_{j \in J} \sin 2\pi y_j,
\EEAS
where $Q_J$ is a multivariate polynomial.
Then, when taking the square, we get the following terms  for all $J,J' \subset \{1,\dots,d\}$:
\[
Q_J(\cos 2\pi y_1,\dots,\cos 2\pi y_d)Q_{J'}(\cos 2\pi y_1,\dots,\cos 2\pi y_d) \prod_{j \in J} \sin 2\pi y_j
\prod_{j' \in J'} \sin 2\pi y_{j'}.
\]
When $J=J'$, writing $x_1 = \cos 2\pi y_1, \dots, x_d = \cos 2\pi y_d$ for $x \in [-1,1]^d$, we get the term
\BEQ \label{eq:sm}
Q_J(x_1,\dots,x_d)^2 \prod_{j \in J} ( 1 - x_j^2),
\EEQ
while for $J \neq J'$, the sum of all terms coming from all squares must vanish because the original trigonometric polynomial $f$ has no sine terms.

Thus,  using Chebyshev polynomials, we get precisely the Schm\"udgen representation~\cite{schmudgen2017moment} of polynomials on $[-1,1]^d$, as the sum of terms of the form in \eq{sm} for all subsets $J \subset\{1,\dots,d\}$. Therefore, the existence of an SOS decomposition for $f$ leads to the existence of the corresponding Schm\"udgen representation for $P$ on $[-1,1]^d$. Thus our results also provide convergence rates for this hierarchy. We therefore actually extend results from~\cite{laurent2022effective}, which themselves provide a quantitative rate in $O(1/s^2)$, improving on the rates of the form $O(1/s^c)$, for an unspecified value of $c$, obtained in the more general set-up of all Schm\"udgen representations by~\cite{schweighofer2004complexity} (see~\cite{baldi2022effective} for a similar result for Putinar representations).

Note that our explicit results need to express a polynomial in the basis of Chebyshev polynomials, and then we consider the $\ell_1$-norm of the associated coefficients.

\paragraph{Transfer of local optimality conditions} While Theorem~\ref{theo:nocond} (\mysec{nocond}) will apply directly to regular polynomials with the construction above, Theorem~\ref{theo:2} (\mysec{withcond}) will require the function $f$ to have finitely many isolated second-order strict minimizers. {We show below that local second-order strict optimality conditions for the minimization of a regular polynomial on $[-1,1]^d$ translates to second-order strict optimality conditions for the corresponding problem on trigonometric polynomials.}

By symmetry, any $x \in (-1,1)^d$ is represented by $2^d$ potential $y$'s such that $x_i = \cos 2\pi y_i$, for $i \in\{1,\dots,d\}$, and if the minimum of $P$ on $[-1,1]^d$ is attained in $x_\ast$ in the interior $(-1,1)^d$, represented by $y_\ast \in [0,1]^d$ (any of the $2^d$ possible ones), we have $\frac{ \partial P}{\partial x_i}(x_\ast) = 0$ for all $i \in\{1,\dots,d\}$, and thus
$\frac{ \partial f}{\partial y_i}(y_\ast) \!=\! -2\pi \sin [2\pi (y_\ast)_i] \frac{ \partial P}{\partial x_i}(x_\ast) \!=\! 0 $, and 
\BEAS
\frac{ \partial^2 f}{\partial y_i \partial y_j}(y_\ast) & = &  
 - 1_{i=j} (2\pi)^2 \cos [2\pi (y_\ast)_i] \frac{ \partial P}{\partial x_i}(x_\ast)
 \\[-.25cm]
 & & \hspace*{4cm} +  (2\pi)^2  \sin [2\pi (y_\ast)_i]  \sin [2\pi (y_\ast)_j] \frac{ \partial^2 P}{\partial x_i \partial x_j}(x_\ast) \\[-.1cm]
 & = &   (2\pi)^2  \sin[ 2\pi (y_\ast)_i ] \sin [ 2\pi (y_\ast)_j] \frac{ \partial^2 P}{\partial x_i \partial x_j}(x_\ast).
 \EEAS
 Since $x_\ast \in (-1,1)^d$,   $\sin [2\pi (y_\ast)_i] \neq 0$ for all $i \in \{1,\dots,d\}$, and thus, if the Hessian of~$P$ at $x_\ast$ is positive definite, so is the one $f$ at $y_\ast$, and therefore we obtain $2^d$ strict second-order minimizers for the trigonometric polynomial if the original polynomial had such a minimizer in the interior of $[-1,1]^d$.
 
 If the minimizer $x_\ast$ is on the boundary, we obtain a similar result. Indeed, assume without loss of generality that $(x_\ast)_i = 1$ for $i \in \{1,\dots,r\}$ and $(x_\ast)_i \in (-1,1)$ for $i \in \{r+1,\dots,d\}$. We consider the following standard sufficient conditions for a strict local minimizer: $\frac{ \partial P}{\partial x_i}(x_\ast) < 0$ for $i \in \{1,\dots,r\}$, $\frac{ \partial P}{\partial x_i}(x_\ast) = 0$ for $i \in \{r+1,\dots,d\}$, and the square submatrix of the Hessian corresponding to indices in $ \{r+1,\dots,d\}$ is positive definite. Then, using the partial derivative computations above, we have $\frac{ \partial f}{\partial y_i}(y_\ast) = -2\pi \sin [2\pi (y_\ast)_i] \frac{ \partial P}{\partial x_i}(x_\ast)  = 0$ for all $i \in \{1,\dots,d\}$, since either  $\frac{ \partial P}{\partial x_i}(x_\ast)=0$ or  $\sin [2\pi (y_\ast)_i]=0$. Moreover, the Hessian of $f$ is block diagonal with one block composed of a diagonal matrix with elements $ - (2\pi)^2 \cos [2\pi (y_\ast)_i] \frac{ \partial P}{\partial x_i}(x_\ast)$ (which are strictly positive for $i \in \{1,\dots,r\}$), and another block with elements $ (2\pi)^2  \sin[ 2\pi (y_\ast)_i ] \sin [ 2\pi (y_\ast)_j] \frac{ \partial^2 P}{\partial x_i \partial x_j}(x_\ast)$, which is a positive definite block by assumption. Thus the Hessian is positive definite, and we obtain a second-order strict minimizer.

 \subsection{Review of existing results}
 In this section, we briefly review results about SOS hierarchies for the particular case of unconstrained optimization of trigonometric polynomials:
 \BIT
 \item If $d=1$, and $f$ is a trigonometric polynomial of degree $2r$, it is well-known that $\varepsilon(f,s) = 0$ as soon as $s \geqslant r$, as all non-negative trigonometric polynomials are sums-of-squares~\cite{fejer,riesz}.
 
 \item When $d=2$, then for any trigonometric polynomial $f$, the relaxation is tight with $s$ sufficiently large (but unknown a priori bound), that is $\varepsilon(f,s)$ is equal to zero for $s$ greater than some $s_0(f)$ (as a consequence of \cite[Corollary~3.4]{scheiderer2006sums}).
 
 \item When $d>1$, any \emph{strictly positive} trigonometric polynomial is a sum-of-squares~\cite{putinar1992complexification,megretski2003positivity}, but there exist non-negative polynomials which are not SOS~\cite{nafta}. Thus SOS hierarchies have to converge but cannot always be finitely convergent.
 \item When the set of zeroes of the non-negative function $f$ is finite and with invertible Hessians at these points, the hierarchy is finitely convergent, but with no a priori bound on the required degree~\cite{nie2014optimality}.
 \EIT

The goal of this paper is to provide upper-bounds of $\varepsilon(f,s)$ in \eq{EPS} for $d>1$, first without assumptions with a rate $O(1/s^2)$ (\mysec{nocond}), and then with stronger assumptions regarding the Hessian at optimum and explicit exponential rates (\mysec{withcond}).

\section{$O(1/s^2)$ convergence without assumptions for polynomials}
\label{sec:nocond}

We now show that the hierarchy of degree $s$ leads to a convergence rate in $O(1/s^2)$ with explicit simple constants and few assumptions. {Since no assumptions are made on polynomials except their degrees, this directly leads to an approximation result for moment matrices presented in \mysec{moment}.}\footnote{Sections~\ref{sec:nocond} and \ref{sec:withcond} are independent, and thus can read in any order.}

\begin{theorem}
\label{theo:nocond}
For any trigonometric polynomial $f$ of degree less than $2r$, we have, for any $s \geqslant 3r$, and for $\bar{f} = \hat{f}(0)$ the mean-value of $f$:
\[
\varepsilon(f,s) \leqslant  \| f - \bar{f}\|_{\rm F}\cdot   
\Big[  \Big( 1 - \frac{6r^2}{s^2} \Big)^{-d} - 1 \Big] \sim_{s \to +\infty} \| f - \bar{f} \|_{\rm F}\cdot    \frac{ 6 r^2 d }{s^2}.
\]
\end{theorem}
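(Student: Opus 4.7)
My plan is a primal-side construction: build an explicit sum-of-squares certificate for $f-c$ with $c$ close to $f_\ast$, by convolving $f-f_\ast$ against a Jackson-type kernel and absorbing the resulting Fourier-side error into a second SOS term. Concretely, I would pick a univariate non-negative trigonometric polynomial $q$ on $[0,1]$ of degree $\leq s-r$, with $\int q = 1$ (so $\hat q(0)=1$) and satisfying the Jackson-type estimate $\hat q(k) \geq 1 - 6r^2/s^2$ for every $|k|<2r$ (for instance a suitably normalized squared Fej\'er kernel). Tensorize via $Q(x) := \prod_{i=1}^d q(x_i)$, so that $\hat Q(\omega) = \prod_i \hat q(\omega_i) \geq (1-6r^2/s^2)^d$ whenever $\|\omega\|_\infty < 2r$, and define
\[
\sigma(x) \;:=\; \int_{[0,1]^d} Q(x-y)\bigl(f(y)-f_\ast\bigr)\,dy,
\]
which is non-negative and a trigonometric polynomial of degree $\leq 2(s-r) \leq 2s$ in $x$.

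The first key step is to promote non-negativity of $\sigma$ to a genuine SOS certificate (not automatic in $d>1$). Writing $q = |p|^2$ and $P(x) := \prod_i p(x_i)$, the identity $Q(x-y) = |P(x-y)|^2$ yields $\sigma(x) = \tilde\varphi(x)^\ast (DWD^\ast)\tilde\varphi(x)$, where $\tilde\varphi$ denotes the degree-$(s-r)$ feature map, $D$ is diagonal with the coefficients of $P$, and $W_{k,k'} := \widehat{(f-f_\ast)}(k-k')$ is a multivariate Toeplitz matrix. The standard identity
\[
u^\ast W u \;=\; \int_{[0,1]^d} (f-f_\ast)(y)\,\Bigl|\sum_k u_k\, e^{2i\pi k^\top y}\Bigr|^2 dy,
\]
combined with $f \geq f_\ast$, shows $W \succeq 0$, hence $DWD^\ast \succeq 0$, so $\sigma$ is SOS of degree $\leq 2s$.

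The second key step handles the Fourier-side error $r(x) := \sigma(x) - (f(x)-f_\ast) = \sum_{\omega \neq 0}\bigl(\hat Q(\omega) - 1\bigr)\hat f(\omega)\,e^{2i\pi \omega^\top x}$, a real trigonometric polynomial of degree $< 2r$ with $\hat r(0) = 0$. I claim $\|r\|_{\rm F} - r$ is \emph{itself} SOS of degree $\leq 2s$: pairing $\omega$ with $-\omega$ and using the pointwise identity $1 - \cos(\theta + 2\pi \omega^\top x) = \tfrac12 \bigl|1 - e^{i\theta} e^{2i\pi \omega^\top x}\bigr|^2$ gives
\[
\|r\|_{\rm F} - r(x) \;=\; \sum_{\{\omega,-\omega\},\,\omega \neq 0} |\hat r(\omega)|\,\bigl|1 - e^{i\theta_\omega} e^{2i\pi \omega^\top x}\bigr|^2, \qquad \theta_\omega := \arg \hat r(\omega),
\]
a non-negative combination of squared moduli of complex trigonometric polynomials of degree $\|\omega\|_\infty < 2r \leq s$, each splitting into two real squares of matching degree. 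Setting $c := f_\ast - \|r\|_{\rm F}$ then gives $f(x) - c = \sigma(x) + (\|r\|_{\rm F} - r(x))$, a sum of two SOS polynomials of degree $\leq 2s$, so $c_\ast(f,s) \geq f_\ast - \|r\|_{\rm F}$ and therefore $\varepsilon(f,s) \leq \|r\|_{\rm F} = \sum_{\omega \neq 0}\bigl(1 - \hat Q(\omega)\bigr)|\hat f(\omega)|$.

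To reach the stated form I would rewrite $1 - \hat Q(\omega) = \hat Q(\omega)\bigl(\hat Q(\omega)^{-1} - 1\bigr)$, bound $\hat Q(\omega)^{-1} - 1 \leq (1-6r^2/s^2)^{-d} - 1$ using Step 1, and then use $\sum_{\omega \neq 0}\hat Q(\omega)|\hat f(\omega)| \leq \|f - \bar f\|_{\rm F}$ (because $0 \leq \hat Q(\omega) \leq 1$ for this kernel). I expect the main obstacle to be the SOS argument for $\|r\|_{\rm F} - r$: in dimension $d > 1$ non-negativity does not imply SOS, and the proof must exploit the scalar single-frequency structure of each term in $r$ via the $1 - \cos = \tfrac12 |1 - e^{i\cdot}|^2$ identity. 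A secondary technical point is producing the 1D Jackson-type kernel whose Fourier coefficients admit the precise bound $1 - \hat q(k) \leq 6r^2/s^2$; this is a classical one-dimensional approximation-theoretic construction.
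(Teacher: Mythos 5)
Your argument is correct and reaches the same quantitative bound, but it handles the approximation error in a genuinely different way from the paper. Both approaches convolve with the same normalized squared Fej\'er (triangular-Fourier) kernel and rely on the same $O(1/s^2)$ Jackson-type estimate on its Fourier coefficients; and your ``Step~1'' (that $\sigma = Q \ast (f-f_\ast)$ is SOS because the associated moment matrix $W$ of the non-negative function $f - f_\ast$ is PSD) is equivalent to the paper's observation that $Th$ is SOS whenever $h\geqslant 0$. Where you diverge is in the treatment of the error. The paper works backwards: it \emph{divides} in Fourier to find $h$ with $Th = f - f_\ast + b$, and then uses $\|\cdot\|_\infty \leqslant \|\cdot\|_{\rm F}$ to choose $b$ large enough that $h\geqslant 0$; a single SOS term $Th$ then certifies $f - (f_\ast - b)$. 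You instead apply $T$ forwards to get $\sigma$, accept the non-constant error $r = \sigma - (f - f_\ast)$, and certify $\|r\|_{\rm F} - r$ as an explicit SOS by pairing $\omega$ with $-\omega$ and using $1-\cos\phi = \tfrac12|1-e^{i\phi}|^2$ on each frequency (this is valid because $r$ has $\hat r(0)=0$ and degree $<2r\leqslant s$). The resulting certificate is a sum of two SOS pieces, and the bound $\|r\|_{\rm F} \leqslant \big[(1-6r^2/s^2)^{-d}-1\big]\|f-\bar f\|_{\rm F}$ matches the paper's (in fact you could tighten it slightly to $\big[1-(1-6r^2/s^2)^d\big]\|f-\bar f\|_{\rm F}$). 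What your route buys is a completely explicit primal certificate without any $L_\infty$ control on a Fourier quotient; what the paper's buys is a single-term certificate and slightly cleaner Fourier-side bookkeeping. One small inaccuracy worth fixing: your statement that $q$ has degree $\leqslant s-r$ is inconsistent with the kernel you invoke; for the triangular-Fourier construction the ``square root'' $p$ has degree $s$ (so $q=|p|^2$ has degree $2s$). This does not affect feasibility, since what matters is that the SOS decomposition of $\sigma$ uses squares of degree $\deg p \leqslant s$ and the error term uses squares of degree $<2r\leqslant s$.
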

\begin{proof}
 We here follow the proof technique of~\cite{fang2021sum,laurent2022effective} based on integral operators by adapting it to trigonometric polynomials of degree $2r$, which are easier to deal with than spherical harmonics or regular polynomials through the use of Fourier series. We consider the following integral operator on $1$-periodic functions on  $ [0,1]^d$ to $ \mathbb{R}$, defined as 
 \BEQ
 \label{eq:Th}
 Th(x) = \int_{[0, 1]^d} |q(x-y)|^2 h(y) dy,
 \EEQ
 for a well-chosen $1$-periodic function $ q$ which is a trigonometric polynomial of degree~$ s$. 
 {The function $x \mapsto |q(x-y)|^2$ is an element of the finite-dimensional cone of SOS polynomials of degree $s$, thus,} by design, if $ h$ is a non-negative function, then $ Th$ is a sum of squares of polynomials of degree less than $ s$. We will find $ h$ such that $ Th = f - f_\ast+b$ for a constant $ b \geqslant 0$, for $f_\ast$ the minimal value of $f$, which will prove the result, since then $f = f_\ast - b + Th$, and $f_\ast-b$ is smaller than the value of the SOS relaxation $c_\ast(f,s)$, leading to $f_\ast - c_\ast(f,s) \leqslant b$.

In the Fourier domain, since convolutions lead to pointwise multiplication and vice-versa, we have for all $\omega \in \zb^d$, where $\hat{q} \ast \hat{q}(\omega)$ is a shorthand for $(\hat{q} \ast \hat{q})(\omega)$ : 
\[
\widehat{Th}(\omega) = \hat{q} \ast \hat{q}(\omega) \cdot \hat{h}(\omega),
\]
 and thus, the candidate $ h$ is defined by its Fourier series, which is equal to zero for $ \| \omega\|_\infty > 2r$, and to 
 \[ \frac{\hat{f}(\omega)+(b-f_\ast) 1_{\omega = 0}}{ \hat{q}  \ast \hat{q}(\omega)}\]
  otherwise. If we impose that $ \hat{q}  \ast \hat{q}(0)=1$, we then have 
\BEAS
   f -f_\ast + b  - h &  = &    \sum_{\omega  \in \zb^d} \hat{f}(\omega) \Big(  1  - \frac{1}{\hat{q} \ast \hat{q}(\omega)} \Big) \exp( 2i \pi \omega^\top \cdot) \\
& = &    \sum_{\omega  \neq 0 } \hat{f}(\omega) \Big(  1  - \frac{1}{\hat{q} \ast \hat{q}(\omega)} \Big) \exp( 2i \pi \omega^\top \cdot) .\EEAS
 We then get:
$ \| f -f_\ast + b  - h\|_\infty =  \big\| \sum_{\omega  \neq 0} \hat{f}(\omega) \big(  1  - \frac{1}{\hat{q} \ast \hat{q}(\omega)} \big) \exp( 2i \pi \omega^\top \cdot) \big\|_\infty.$

Using that $\|\cdot\|_{\infty} \leqslant \| \cdot\|_{\rm F}$, we get: 
\[ \| f -f_\ast + b \, - h\|_\infty \leqslant \!\sum_{\omega \neq 0}| \hat{f}(\omega)| \cdot  \! \max_{ \|\omega\|_\infty \leqslant 2r}  \Big|   \frac{1}{\hat{q} \ast \hat{q}(\omega)}  \, - 1\Big|
\leqslant \| f - \bar{f}\|_{\rm F}\cdot  \max_{ \|\omega\|_\infty \leqslant 2r}  \Big|   \frac{1}{\hat{q} \ast \hat{q}(\omega)}  \, - 1\Big| .
\]
The goal is now to find a good function $ q:[0,1]^d \to \rb$ with Fourier support within the  ball of radius $ s$, so that $ \hat{q}  \ast  \hat{q}(\omega)$ is close to $ 1$ for $ \| \omega\|_\infty \leqslant 2 r$, and simply check when $ \| f -f_\ast + b \, - h\|_\infty \leqslant b$.

A simple candidate is 
$\hat{q}(\omega)   = \frac{1}{(2s+1)^{d/2}} 1_{\|\omega\|_\infty \leqslant s}$, {based on a ``box kernel''};  we can then compute the convolution and obtain that  $\hat{q}\ast \hat{q}(\omega) = \prod_{i=1}^d \big( 1 - \frac{|\omega_i|}{2s+1} \big)_+ \geqslant \big( 1 - \frac{2r}{2s+1} \big)^d, $ leading to $b=  \| f - \bar{f}\|_{\rm F} \cdot \big[  \big( 1 - \frac{2r}{2s+1} \big)^{-d} - 1 \big].$ 
When $ s$ goes to infinity, we have the equivalent $ b  \sim  \| f - \bar{f}\|_{\rm F}\cdot    \frac{ rd }{s} = O(1/s)$, which thus converges to zero, but at a slow rate.

A better candidate leads to a rate in $ O(1/s^2)$ (like in~\cite{fang2021sum,laurent2022effective}), {is based on a a``triangular kernel''} as:
\[ \hat{q}(\omega) = a \prod_{i=1}^d \Big( 1 - \frac{|\omega_i|}{s} \Big)_+, \]
with $a$ a normalizing constant. 
A tedious computation including sums of powers of consecutive integers, {detailed in Appendix~\ref{app:tedious}}, leads to, for any $\|\omega\|_\infty\leqslant s$ (note that $\hat{q}\ast \hat{q}(\omega)$ is only equal to zero for $\|\omega\|_\infty>2s$),
\BEQ
\label{eq:tedious} \hat{q}\ast \hat{q}(\omega)
= a^2 \prod_{i=1}^d \Big[
\frac{2s}{3}+\frac{1}{3s}  - \frac{\omega_i^2}{s} + \frac{|\omega_i|}{2s^2}( \omega_i^2 - 1) \Big].
\EEQ
Thus we need $a^2 = \frac{1}{(\frac{2s}{3} + \frac{1}{3s})^d}$ to get $ \hat{q}\ast \hat{q}(0)=1$ and thus 
\[
\hat{q}\ast \hat{q}(\omega) 
\geqslant  \prod_{i=1}^d \Big( 1- \frac{1}{\frac{2s}{3} + \frac{1}{3s}} \frac{\omega_i^2}{s}\Big)_+
\geqslant \prod_{i=1}^d \Big( 1- \frac{3\omega_i^2}{2s^2}\Big)_+,
\]
 which is greater than $\big( 1- \frac{6r^2}{s^2}\big)_+^d$,
when in addition $\|\omega\|_\infty \leqslant 2r $.
This leads to, for $ s \geqslant 3 r \geqslant \sqrt{6} r $, 
\[ b  \leqslant  \| f - \bar{f}\|_{\rm F}\cdot   
\Big[  \Big( 1 - \frac{6r^2}{s^2} \Big)^{-d} - 1 \Big] \sim  \| f -  \bar{f}\|_{\rm F}\cdot    \frac{ 6 r^2 d }{s^2}.
\]
{Above, the asymptotic equivalent is taken with $s$ tending to infinity, with $r$ and $d$ being fixed.}
\end{proof}

We can make the following observations:
\BIT
\item The proposed bound follows a series of earlier bounds with similar behavior in $O(1/s^2)$ for the convergence rate of Lasserre's SOS hierarchies and uses the same proof technique based on integral operators~\cite{fang2021sum,laurent2022effective,slot2022sumCD,slot2022sum}. The most closely related is the one of~\cite{laurent2022effective}, which considers regular polynomials on $[-1,1]^d$ with  Schm\"udgen's representation, but with a different choice for the function $q$ in \eq{Th}. As shown in \mysec{poly}, our bound also applies to this case through a change of variable; it differs in the choice of normalization of coefficients (for us, $\ell_1$-norm of the expansion in Chebyshev polynomials).

\item Note that we could extend this result to other types of regularity beyond finite support and bounded F-norm, with the asymptotic bound $\| f - \bar{f}\|_{\rm F}\cdot    \frac{ 6 r^2 d }{s^2} + \sum_{\| \omega\|_\infty > 2r} | \hat{f}(\omega)|$, and by optimizing over $r \leqslant s$.

\item We believe the proof technique based on integral operators cannot lead to a better rate than $O(1/s^2)$, with the following informal argument. To obtain a faster rate in the simplest one-dimensional case, the function $r: [0,1] \to \rb$ defined as $r(x) = | q(x)|^2$, should be so that its Fourier series $\hat{r}(\omega)$ is of the form $f(\omega/s)$ for a function $f: \rb \to \rb$ such that $f''(0)=0$ and with support in $[-2,2]$. Thus, when $s$ gets large, $r(x) = \sum_{|\omega| \leqslant 2s } f(\omega/s) e^{2i\pi \omega x}$ should be proportional to the Fourier transform of~$f$. Thus the Fourier transform of $f$ should be non-negative with $f''(0) \propto \int_\rb x^2 \hat{f}(x)^2 dx = 0 $, which is impossible.

\item A natural open question is the optimality of the ``assumption-free'' bound in $O(1/s^2)$  (regardless of the proof technique). We show in the next section that adding extra assumptions leads to significantly better rates.

\item As shown in Appendix~\ref{app:spectral}, it turns out that a simple spectral relaxation of the problem already achieves a rate in $\|f - \bar{f}\|_{\rm F} \cdot \frac{rd}{s}$, which is worse than the $O(1/s^2)$ rate that we show in this section, but not representative of the empirical differences between the two methods. Our following result will show an explicit benefit of the SOS relaxation by obtaining exponential convergence rates (with extra assumptions on $f$).

\EIT

 \subsection{Approximation of moment matrices}
 \label{sec:moment}
 {We denote by $\mathcal{K}_s$ the closure of the convex hull of all Hermitian matrices $\varphi(x)\varphi(x)^\ast \in \cb^{(2s+1)^d \times (2s+1)^d}$ for $ x \in [0,1]^d$. It contains exactly all moment matrices; SOS relaxations can then be interpreted by relaxing it to the set $\widehat{\mathcal{K}}_s$ of ``pseudo-moment matrices''~$\Sigma$ such that $\Sigma \in \mathcal{V}_s$, $\Sigma \succcurlyeq 0$, and $\tr(\Sigma)=1$.
For $r\leqslant s$, we denote by ${\Pi}_{s}^{(r)}$ the linear operator on $ \mathcal{V}_s$ that sets of elements $H_{\omega\omega'}$ to zero as soon as $\| \omega\|_\infty > r $ or $\|\omega'\|_\infty > r$, and multiply all other elements by $(2s+1)^d / (2r+1)^d$ (making it essentially an element of $\mathcal{V}_r$). A classical duality argument leads to the following corollary of Theorem~\ref{theo:nocond}.  See proof in Appendix~\ref{app:hausd}.}

 \begin{corollary}
 \label{cor:hausd}
For any $s \geqslant 3r$, and any $\Sigma \in \widehat{\mathcal{K}}_s$, there exists $\Sigma' \in \mathcal{K}_s$ such that
\[
\big\| {\Pi}_{s}^{(r)} \big( \Sigma - \Sigma') \|_{\rm Frob}
\leqslant  \frac{\sqrt{2}}{(2r+1)^{d}} \Big[  \Big( 1 - \frac{6r^2}{s^2} \Big)^{-d} - 1 \Big],
\]
where $\| M \|_{\rm Frob}$ denotes the Frobenius norm of $M$.
 \end{corollary}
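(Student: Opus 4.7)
The plan is to derive Corollary~\ref{cor:hausd} from Theorem~\ref{theo:nocond} by convex (Fenchel) duality in the Hilbert space of Hermitian matrices equipped with the Frobenius inner product.

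First I will set up the duality. Both $\Pi_s^{(r)}\Sigma$ and every element of $\Pi_s^{(r)}\mathcal{K}_s$ lie in the subspace $\mathcal{V}_s^{(r)} := \Pi_s^{(r)}(\mathcal{V}_s)$ of Hermitian Toeplitz matrices supported in the top-left $(2r+1)^d\times(2r+1)^d$ block, so by the classical formula for the distance from a point to a closed convex set in a Hilbert subspace,
\[
\inf_{\Sigma'\in\mathcal{K}_s}\|\Pi_s^{(r)}(\Sigma-\Sigma')\|_{\rm Frob}
\;=\;\sup_{H\in\mathcal{V}_s^{(r)},\,\|H\|_{\rm Frob}\leq 1}\Big[\langle H,\Pi_s^{(r)}\Sigma\rangle-\sup_{\Sigma'\in\mathcal{K}_s}\langle H,\Pi_s^{(r)}\Sigma'\rangle\Big].
\]

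Second, I will translate each side into a trigonometric-polynomial statement to which Theorem~\ref{theo:nocond} can be applied. For $H\in\mathcal{V}_s^{(r)}$ I define $\tilde h(x):=\varphi(x)^\ast H\varphi(x)$, a real trigonometric polynomial of degree at most $2r$. With the scaling $c:=(2s+1)^d/(2r+1)^d$ built into $\Pi_s^{(r)}$, one checks directly that $\langle H,\Pi_s^{(r)}\Sigma\rangle=c\langle H,\Sigma\rangle$ and $\sup_{\Sigma'\in\mathcal{K}_s}\langle H,\Pi_s^{(r)}\Sigma'\rangle=c\sup_{x}\tilde h(x)$; applying Theorem~\ref{theo:nocond} to $-\tilde h$ (of degree $\leq 2r$, using $s\geq 3r$) then gives $\langle H,\Sigma\rangle-\sup_x\tilde h(x)\leq\|\tilde h-\bar{\tilde h}\|_{\rm F}\cdot[(1-6r^2/s^2)^{-d}-1]$. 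Substituting back into the duality identity, the corollary will follow once the Hilbert-space estimate
\[
c\cdot\sup_{H\in\mathcal{V}_s^{(r)},\,\|H\|_{\rm Frob}\leq 1}\|\tilde h-\bar{\tilde h}\|_{\rm F}\ \leq\ \frac{\sqrt{2}}{(2r+1)^d}
\]
is established.

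I expect this last estimate to be the main obstacle. Parameterising $H\in\mathcal{V}_s^{(r)}$ by its Toeplitz symbol $h(\alpha)$ with $h(-\alpha)=\overline{h(\alpha)}$, the Frobenius norm is $\|H\|_{\rm Frob}^2=\sum_\alpha N(\alpha)|h(\alpha)|^2$ while $\hat{\tilde h}(\alpha)=N(\alpha)h(\alpha)/(2s+1)^d$, where $N(\alpha)=\prod_i(2r+1-|\alpha_i|)$; moreover $\tr\Sigma=\tr\Sigma'=1$ forces $\Pi_s^{(r)}(\Sigma-\Sigma')$ to be diagonal-free, so only the modes $\alpha\neq 0$ contribute. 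The task reduces to bounding $\sum_{\alpha\neq 0}N(\alpha)|h(\alpha)|$ subject to $\sum_\alpha N(\alpha)|h(\alpha)|^2\leq 1$; a naive Cauchy--Schwarz over all nonzero Fourier modes is too weak by a factor of $(2r+1)^d$, so recovering the sharp constant $\sqrt{2}/(2r+1)^d$ will require a weighted Cauchy--Schwarz that groups conjugate pairs $\alpha\leftrightarrow-\alpha$ (accounting for the $\sqrt{2}$) and exploits the concentration of the weight $N(\alpha)$ near $\alpha=0$ relative to the uniform $\ell^2$ spread permitted by the Frobenius constraint (accounting for the $1/(2r+1)^d$).
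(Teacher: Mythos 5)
Your reduction is set up correctly up to the last step, and that last step is not merely ``the main obstacle'' you defer --- it is false. After cancelling $c=(2s+1)^d/(2r+1)^d$, your required estimate reads $\sup_H \|\tilde h-\bar{\tilde h}\|_{\rm F}\leqslant \sqrt{2}/(2s+1)^d$ over Toeplitz $H$ supported on the $(2r+1)^d$ block with $\|H\|_{\rm Frob}\leqslant 1$. Take the symbol $h$ supported on the single conjugate pair $\alpha=\pm e_1$: the Frobenius constraint forces $|h(e_1)|=(2N(e_1))^{-1/2}$ with $N(e_1)=2r(2r+1)^{d-1}$, and then
\[
\|\tilde h-\bar{\tilde h}\|_{\rm F}=\frac{2N(e_1)\,|h(e_1)|}{(2s+1)^d}=\frac{\sqrt{2N(e_1)}}{(2s+1)^d}\ \asymp\ \frac{\sqrt{2}\,(2r+1)^{d/2}}{(2s+1)^d},
\]
which exceeds your target by a factor of order $(2r+1)^{d/2}$ as soon as $r\geqslant 1$. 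In fact Cauchy--Schwarz is essentially tight for this supremum: its exact value is $\big(\sum_{\alpha\neq 0}N(\alpha)\big)^{1/2}/(2s+1)^d\asymp (2r+1)^d/(2s+1)^d$, the maximizer (constant $|h(\alpha)|$) being perfectly compatible with Hermitian symmetry. So no weighted Cauchy--Schwarz, pairing of $\pm\alpha$, or use of the concentration of $N(\alpha)$ can rescue the bound; your chain of inequalities can only deliver $\inf_{\Sigma'}\|\Pi_s^{(r)}(\Sigma-\Sigma')\|_{\rm Frob}\lesssim \big[(1-6r^2/s^2)^{-d}-1\big]$, i.e.\ it loses the entire factor $(2r+1)^{-d}$ in the statement. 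The root cause is that the Frobenius unit ball of dual test matrices contains ``spiky'' Toeplitz symbols whose associated polynomials have F-norm of order $(2r+1)^{d/2}/(2s+1)^d$, and Theorem~\ref{theo:nocond} is paid in units of that F-norm.

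The paper's proof differs precisely at this point: it does not dualize in the Frobenius norm. It introduces the norm $\Theta$ on the projected block, built so that its dual norm (after quotienting by $\mathcal{V}_r^\perp$) computes the F-norm of the associated test polynomial; dualizing the distance in $\Theta$ therefore makes every dual variable a polynomial with $\|\cdot\|_{\rm F}\leqslant 1$, to which Theorem~\ref{theo:nocond} applies with constant exactly $\big[(1-6r^2/s^2)^{-d}-1\big]$, giving $\Theta\big(\Pi_s^{(r)}(\Sigma-\Sigma')\big)\leqslant \big[(1-6r^2/s^2)^{-d}-1\big]$. Only afterwards does it pass to the Frobenius norm, and this conversion exploits the structure of the \emph{primal} object: $\Pi_s^{(r)}(\Sigma-\Sigma')$ is Toeplitz with matched diagonals and a symbol whose $\ell_1$ norm is controlled ($\Theta$ equals $(2r+1)^d$ times that $\ell_1$ norm on Toeplitz matrices). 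The factor $(2r+1)^{-d}$ in the corollary must be extracted from this primal structure; it is invisible once you take a supremum over the whole Frobenius unit ball of dual matrices. If you want to keep your framing, you must replace the Frobenius ball of dual variables by a set adapted to the F-norm of the associated polynomials --- which is exactly the role of the paper's norm $\Theta$.
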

 {This corollary shows that matrices in $\mathcal{K}_r$ can be well approximated by projections of matrices in $ \widehat{\mathcal{K}}_s$. Note that the factor $(2r+1)^{-d}$ is an outcome of our choice of normalization to unit traces.}
 
 \section{Exponential convergence with local optimality conditions}
 \label{sec:withcond}
 We consider the simplest situation where the minimum of $f$ is attained at a unique point $x_\ast$ on the torus, and we assume that the Hessian $f''(x_\ast)$ is invertible. This implies that there exist ``conditioning'' constants $\alpha \in [0,1/2),\beta >0$, and $\lambda > 0 $ such that:
 \BEQ
\label{eq:conditioning} \| x - x_\ast\|_\infty \leqslant \alpha \Rightarrow f''(x) \succcurlyeq \lambda \idm  \ \mbox{ and }  \ 
 \| x - x_\ast\|_\infty \geqslant \frac{\alpha}{2} \Rightarrow f(x) - f(x_\ast) \geqslant \beta,
 \EEQ
 that is, (a) in the $\ell_\infty$-ball of radius $\alpha$ around $x_\ast$, the Hessian of $f$ has strictly positive eigenvalues greater than $\lambda$ (which we can take to be $\frac{1}{2} \lambda_{\min}(f''(x_\ast))$), and hence $f$ is strictly convex, and (b) away from a slightly smaller ball, $f-f(x_\ast)$ is strictly positive and greater than $\beta > 0$. See the illustration below in one dimension.
 
 \vspace*{.1cm}
 
 \begin{center}
 \includegraphics[width=8.5cm]{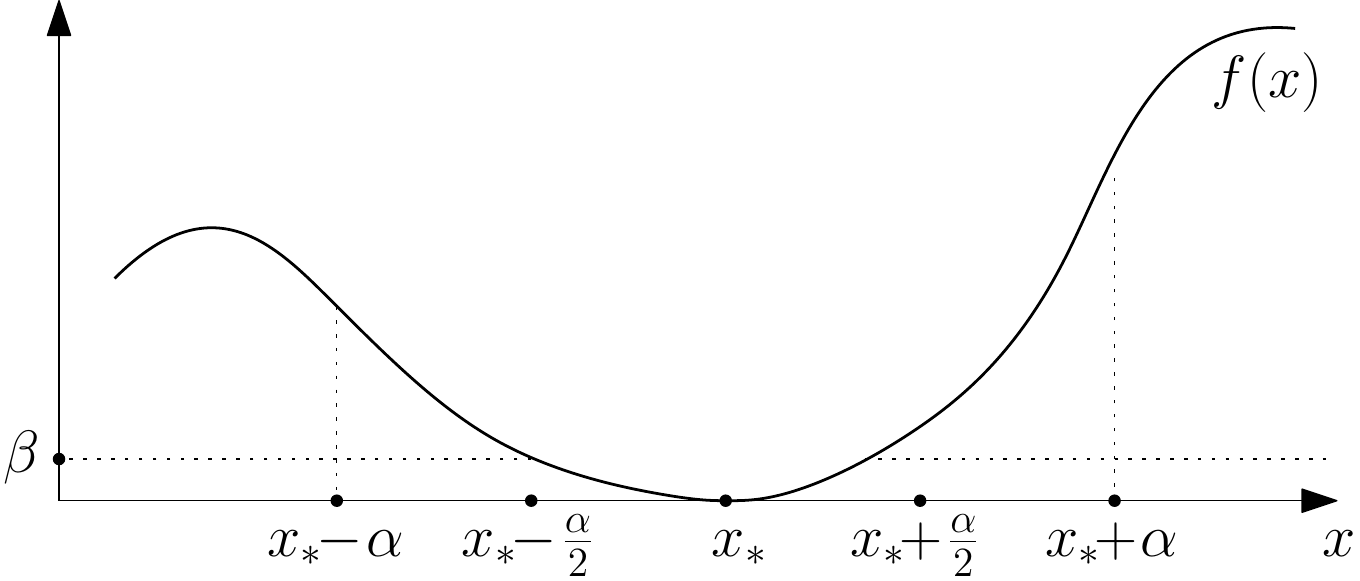}
  \end{center}
  
 \vspace*{.1cm}
 
 The proof technique is based on the one introduced in Lemma~1 and Theorem~2 of~\cite{rudi2020finding} (for the non-periodic case and without explicit constants) and can be extended directly to situations where the global minimum is attained at finitely many points with the same local Hessian condition (see also~\cite{ferey2022} for cases where minimizers are whole manifolds).
 
 Note that in that regime, the hierarchy is known to be finitely convergent~\cite{nie2014optimality}, but without bounds on the required degree $s$.
The following theorem gives an explicit bound on the convergence rate for any infinitely differentiable function with a specific growth condition for derivatives.
We denote by $\nabla^m f(x)$ the symmetric $m$-th order tensor of $m$-th order derivatives, {with element $\nabla^m f(x)_{j_1,\dots,j_m} = \frac{\partial^m f}{\partial x_{j_1} \cdots \partial x_{j_m}}(x)$, where $j_1, \dots,j_m \in \{1,\dots,d\}$.  Throughout the proofs, we will use the notation $\nabla^m f(x)[\delta,\dots,\delta] \in \rb$ to denote the contraction of the tensor along the $m$ copies of $\delta$, that is, 
$\nabla^m f(x)[\delta,\dots,\delta] = \sum_{j_1,\dots,j_m = 1}^d \nabla^m f(x)_{j_1,\dots,j_m} \delta_{j_1} \cdots \delta_{j_m}$.
We consider bounds on derivatives of the form 
\BEAS
\| \nabla^m f \|_\infty = \max_{x \in [0,1]^d} \sup_{\| \delta\|_1 \leqslant 1} |\nabla^m f(x)[\delta,\dots,\delta] |.
\EEAS
 Up to a constant that depends on $m$, this is equivalent to imposing a bound on all partial derivatives (see Appendix~\ref{app:A} for a precise relationship).
 This can also be seen as a bound on all directional derivatives, that is, of all $|g^{(m)}(0)|$, for $g(t) = f(x+t\delta)$.}

\begin{theorem}
\label{theo:2}
Assume that $f:[0,1]^d \to \rb$ is infinitely differentiable and such that $\| \nabla^m f \|_\infty \leqslant \| f - f_\ast\|_{\rm F} ( 4\pi r)^m $ for all $m \geqslant 0$.
Assume there exist $x_\ast \in [0,1]^d$, as well as, $\alpha \in [0,1/2),\beta >0$, and $\lambda > 0 $ such that \eq{conditioning} is satisfied. Then, we have: 
 \[
 \varepsilon(s,f) \leqslant  \triangle_1 \exp\Big( -  \Big(\frac{s}{\triangle_2 }\Big)^{1+\xi}  \Big),
 \]
 for any $\xi \in (0,1/2]$, with
 \BEQ
 \label{eq:triangle}
 \triangle_1 =  (  \beta +   \lambda d^3 )
\big( 32 B^3 d^6  \big)^{d+1}, \ \triangle_2 =  dB,
\EEQ
where $ \ds B =  
 \max\Big\{ \frac{275 }{\alpha\xi} , \frac{8 \pi r \| f - f_\ast\|_{\rm F}}{\beta}, \frac{6}{\lambda} \| f - f_\ast\|_{\rm F} (4\pi r)^3 \Big\}.
 $
\end{theorem}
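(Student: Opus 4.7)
The strategy is to use the ``beyond polynomials'' construction introduced earlier in the paper: for any $c\in\rb$ and any SOS trigonometric polynomial $h$ of degree~$2s$, one has $c_\ast(f,s) \geq c - \|f - c - h\|_{\rm F}$, so that $\varepsilon(f,s)\leq (f_\ast - c) + \|f - c - h\|_{\rm F}$. Setting $c = f_\ast - \eta$ for a regularizer $\eta>0$ to be tuned and $h = q^2$ with $q$ a degree-$s$ trigonometric polynomial approximating $\phi_\eta := \sqrt{g_\eta}$, $g_\eta := f - f_\ast + \eta$ (well-defined and smooth since $g_\eta\geq\eta>0$), one obtains $\varepsilon(f,s)\leq \eta + \|(\phi_\eta - q)(\phi_\eta + q)\|_{\rm F}$. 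Submultiplicativity of $\|\cdot\|_{\rm F}$ reduces everything to producing a good degree-$s$ trigonometric approximation of $\phi_\eta$ in F-norm, and balancing the resulting error against $\eta$.

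The first preliminary is a sharp lower bound on $g_\eta$: combining the two halves of \eq{conditioning} via a second-order Taylor expansion at $x_\ast$ with remainder controlled by $\|\nabla^3 f\|_\infty\leq (4\pi r)^3\|f - f_\ast\|_{\rm F}$ yields
\[
g_\eta(x) \;\geq\; \eta + \kappa\,\min\bigl(\lambda\|x - x_\ast\|_\infty^2,\,\beta\bigr)
\]
for an explicit $\kappa>0$; the third-derivative bound is precisely what produces the third term of $B$ in \eq{triangle}. This ``quadratic-plus-uniform'' lower bound is exactly what is needed to tame the negative powers of $g_\eta$ that appear when differentiating $\phi_\eta$.

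The technical heart is a uniform estimate of all derivatives of $\phi_\eta$. Fa\`a di Bruno's formula expresses $\nabla^m\phi_\eta$ as a sum over set partitions of $\{1,\dots,m\}$ of terms of the form $g_\eta^{1/2 - k}\prod_i\nabla^{m_i}g_\eta$. Inside the ball $\|x - x_\ast\|_\infty\leq\alpha$, where $g_\eta$ can be as small as $\eta$, the low-order factors $\nabla^{m_i}g_\eta$ (for $m_i\leq 2$) are controlled by Taylor expansion at $x_\ast$: the vanishing factors $\|x - x_\ast\|_\infty^{(2 - m_i)_+}$ combine with the quadratic lower bound on $g_\eta$ to absorb the dangerous factor $g_\eta^{1/2 - k}$, while higher-order factors are bounded by the global estimate $\|\nabla^{m_i}f\|_\infty\leq (4\pi r)^{m_i}\|f - f_\ast\|_{\rm F}$. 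Outside this ball, $g_\eta\geq\beta$ furnishes a uniform lower bound independent of $\eta$. Summing over partitions yields an explicit bound $\|\nabla^m\phi_\eta\|_\infty\leq M_m(\eta)$ whose growth rate in $m$ is what will permit the claimed trigonometric approximation rate.

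The final step converts the derivative control of $\phi_\eta$ into an F-norm approximation rate: iterated integration by parts on the torus shows that the degree-$s$ Fourier truncation $q_s$ of $\phi_\eta$ satisfies $\|\phi_\eta - q_s\|_{\rm F}\leq A\exp\bigl(-(s/(dB))^{1+\xi}\bigr)$, where the factor $d$ is inherited from reducing the multivariate Fourier tail to a per-coordinate estimate. Choosing $\eta$ of the same exponential order balances the two sources of error, and inserting the explicit conditioning constants recovers $\varepsilon(f,s)\leq \triangle_1\exp\bigl(-(s/\triangle_2)^{1+\xi}\bigr)$. The principal obstacle is the Fa\`a di Bruno step: the naive bound on $\nabla^m\phi_\eta$ blows up like $\eta^{-m/2}$, and only the sharp quadratic lower bound on $g_\eta$ near $x_\ast$ prevents this; thereafter, propagating all the explicit constants through the final optimization over $\eta$ to match the exact formulas of \eq{triangle} is the most intricate piece of bookkeeping.
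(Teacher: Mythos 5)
Your reduction step is not what the paper states and is itself unproven: the ``beyond polynomials'' paragraph only says that $c-\|f-c-g\|_{\rm F}$ lower-bounds $f_\ast$, not that it lower-bounds $c_\ast(f,s)$; the paper's own bridge (Lemma~\ref{lemma:lemma42}) costs an extra factor $(2s+1)^d$. Your stronger inequality $c_\ast(f,s)\geqslant c-\|f-c-h\|_{\rm F}$ can in fact be salvaged when $f-c-h$ has frequencies in $\{\|\omega\|_\infty\leqslant 2s\}$, by writing each mode pair $2|\hat e(\omega)|+2\,{\rm Re}\,(\hat e(\omega)e^{2i\pi\omega^\top x})$ as $|e^{2i\pi\omega_1^\top x}+e^{-i\theta}e^{2i\pi\omega_2^\top x}|^2$ with $\omega_1-\omega_2=\omega$, $\|\omega_1\|_\infty,\|\omega_2\|_\infty\leqslant s$, but you would have to supply this argument (and it fails for non-polynomial $f$, where higher frequencies appear in $f-c-h$).

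The fatal gap, however, is the derivative estimate for $\phi_\eta=\sqrt{f-f_\ast+\eta}$. The quadratic lower bound on $g_\eta$ cannot ``absorb'' the factors $g_\eta^{1/2-k}$ in the Fa\`a di Bruno expansion: at $x_\ast$ itself one has $\nabla g_\eta(x_\ast)=0$ but $\nabla^2 g_\eta(x_\ast)=f''(x_\ast)\neq 0$, so for instance $\nabla^2\phi_\eta(x_\ast)=f''(x_\ast)/(2\sqrt{\eta})$, and more generally the partition of $\{1,\dots,m\}$ into blocks of size $2$ produces terms of order $\eta^{1/2-m/2}\lambda^{m/2}$ (the one-dimensional model $\sqrt{\eta+\lambda t^2}$ already shows this is sharp). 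Hence any bound of the form $\|\nabla^m\phi_\eta\|_\infty\leqslant C\,B(\eta)^m m!\,\kappa(m)$ must have $B(\eta)\gtrsim \sqrt{\lambda/\eta}$, so the Fourier-truncation error you obtain is of order $\exp\big(-c\,(s\sqrt{\eta}/(dD))^{1/(1+\xi)}\big)$; balancing this against the additive loss $\eta$ gives at best a rate of order $(\log s)^{2}/s^{2}$, i.e.\ polynomial, not the claimed exponential rate. This is precisely why the paper does not regularize: it removes the square-root singularity altogether with a partition of unity $u^2+v^2=1$, writing $f-f_\ast=\sum_{i=1}^d\big[u\,(x-x_\ast)^\top R(x)^{1/2}z_i\big]^2+\big[v\sqrt{f-f_\ast}\big]^2$, where $R(x)=\int_0^1(1-t)f''(x_\ast+t(x-x_\ast))\,dt\succcurlyeq\frac{\lambda}{2}\idm$ on the support of $u$ and $f-f_\ast\geqslant\beta$ on the support of $v$; both pieces then satisfy $\eta$-independent bounds $\lesssim B^m m!\,m^{\eta m}$ (via the scalar and matrix square-root lemmas), which is what yields the exponential rate with the constants of \eq{triangle}. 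Your sketch also misattributes the third term of $B$: it arises from bounding the derivatives of $R(x)$ and of its matrix square root, not from a cubic Taylor remainder in a lower bound on $g_\eta$. As written, the proposal does not prove the theorem.
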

Before describing the proof, we can make a few simple observations:
\BIT
\item Trigonometric polynomials of degree $2r$ satisfy the required growth condition, {because for $f: x \mapsto  e^{2i\pi\omega^\top x}$ where $\omega \in \zb^d$ such that $\|\omega\|_\infty \leqslant 2r$, we have:
$
\| \nabla^m f \|_\infty \leqslant (2\pi \cdot 2r)^m
$.}
\item The result extends a prior result~\cite{woodworth2022non}, that was showing convergence rates faster than any power of~$s$, but without explicit constants, which are needed to obtain the exponential rate.
{When the conditioning constant $\lambda$, $\alpha$, $\beta$ tend to zero, the constant $\Delta_2$
in \eq{triangle} tends to infinity, and the rate is not informative. In this situation, we could add a regularizer and optimize its strength to obtain a rate.}

\item  We could easily consider weaker growth conditions for the $m$-th order derivatives (with slower convergence rates), {such as $\| \nabla^m f \|_\infty = O( r^m m!)$.}

\item We could optimize over $\xi \in [0,1/2)$ to get a better dependence in $s$.

\item The result can be extended to functions with finitely many isolated second-order strict minimizers (following \cite[Theorem 2]{rudi2020finding}).
\EIT

 \subsection{Proof technique}
 \label{sec:mainproof}
 
 {The main technical result is to show that the non-negative function $f -f_\ast$ can be approximated by a trigonometric polynomial $g$ which is a sum of squares of polynomials of degree at most $s$, with an error bound measured in the norm~$\| \cdot \|_{\rm F}$ as $\| f - f_\ast - g\|_{\rm F} \leqslant \varepsilon'(s,f)$. Thanks to the following technical lemma whose proof is in Appendix~\ref{app:techlemma}, this leads to the desired result with $\varepsilon(s,f) = (2s+1)^{d} \varepsilon'(s,f) $.}
 
 \begin{lemma}
 \label{lemma:lemma42}
 {Assume $f$ is a trigonometric polynomial of degree less than $2s$, with minimal value $f_\ast$. If there exists
a trigonometric polynomial $g$ which is a sum of squares of polynomials of degree at most $s$ such that
$\| f - f_\ast - g\|_{\rm F} \leqslant \varepsilon'(s,f)$, then the optimal value $c_\ast(f,s)$ of \eq{primalSOS} and \eq{dualSOS} satisfies}
\[   0 \leqslant   f_\ast -  c_\ast(f,s)  \leqslant  (2s+1)^{d}\varepsilon'(s,f) .
\]
 \end{lemma}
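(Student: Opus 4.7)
The plan is to use the dual semidefinite formulation \eq{dualSOS} and bound its value from below by plugging a convenient decomposition of $F$ into the objective. Since $g$ is a sum of squares of trigonometric polynomials of degree at most $s$, we can write $g(x)=\varphi(x)^\ast G\varphi(x)$ with $G\succcurlyeq 0$ (by expanding each squared polynomial in the feature map \eq{phi} and summing the resulting rank-one terms). Writing also $f(x)=\varphi(x)^\ast F\varphi(x)$ and $h:=f-f_\ast-g$ as $h(x)=\varphi(x)^\ast H\varphi(x)$ with $F,H\in\mathcal{V}_s$ (both are trigonometric polynomials of degree at most $2s$), uniqueness of the Toeplitz representation yields the matrix identity $F=f_\ast \idm + G + H$.

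Plugging this identity into \eq{dualSOS}, any feasible $\Sigma$ (so $\Sigma\succcurlyeq 0$, $\Sigma\in\mathcal{V}_s$, $\tr(\Sigma)=1$) satisfies
\[
\tr(\Sigma F) \;=\; f_\ast + \tr(\Sigma G) + \tr(\Sigma H) \;\geqslant\; f_\ast - \|H\|_{\rm op},
\]
using $\tr(\Sigma G)\geqslant 0$ and the trace--operator-norm duality $|\tr(\Sigma H)|\leqslant \|\Sigma\|_1\|H\|_{\rm op}=\|H\|_{\rm op}$. Minimizing over $\Sigma$ gives $c_\ast(f,s)\geqslant f_\ast-\|H\|_{\rm op}$, so the whole lemma reduces to the operator-norm estimate $\|H\|_{\rm op}\leqslant (2s+1)^d\|h\|_{\rm F}$.

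The remaining step is the main obstacle, since everything above is essentially bookkeeping. I would exploit the Toeplitz structure $H_{\omega,\omega'}=a(\omega'-\omega)$; using the normalization of $\varphi$ from \eq{phi} and grouping the pairs $(\omega,\omega')\in\{-s,\dots,s\}^{2d}$ with fixed difference $\tau$, a direct computation of $\varphi(x)^\ast H\varphi(x)$ yields
\[
\hat h(\tau) \;=\; \frac{1}{(2s+1)^d}\prod_{i=1}^d(2s+1-|\tau_i|)\,a(\tau) \quad \text{for } \|\tau\|_\infty\leqslant 2s,
\]
so that $|a(\tau)|\leqslant (2s+1)^d|\hat h(\tau)|$, since each factor $(2s+1-|\tau_i|)$ is at least $1$ on the support. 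Decomposing $H=\sum_\tau a(\tau)S_\tau$, where $S_\tau$ is the $\{0,1\}$-matrix indicating pairs with $\omega'-\omega=\tau$ (a partial permutation, hence $\|S_\tau\|_{\rm op}\leqslant 1$), the triangle inequality then gives $\|H\|_{\rm op}\leqslant \sum_\tau|a(\tau)|\leqslant (2s+1)^d\sum_\tau|\hat h(\tau)|=(2s+1)^d\|h\|_{\rm F}$, which combined with the previous paragraph concludes the proof.
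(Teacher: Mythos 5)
Your argument is correct and reaches the right bound, though one intermediate claim is stated too strongly. You write that uniqueness of the Toeplitz representation gives the matrix identity $F=f_\ast\idm+G+H$ with $F,H\in\mathcal{V}_s$; but $G$ is the PSD Gram matrix of the SOS decomposition of $g$, which in general lies \emph{outside} $\mathcal{V}_s$ (the PSD and Toeplitz representatives of an SOS polynomial typically differ). What actually holds is $F=f_\ast\idm+G+H+Y_0$ for some $Y_0\in\mathcal{V}_s^\perp$. This is harmless here, because the very next step evaluates $\tr(\Sigma F)$ against $\Sigma\in\mathcal{V}_s$, which annihilates $Y_0$, so your chain of inequalities is valid as written; the imprecision is only in the justification, not in the conclusion.

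Beyond that, your route differs from the paper's in a worthwhile way. The paper works in the primal $\max_{Y\in\mathcal{V}_s^\perp}\lambda_{\min}(F+Y)$, lower-bounds $\|f-g\|_{\rm F}$ by $\frac{\sqrt{2}}{(2s+1)^d}\inf_Y\|F-G+Y\|_{\rm op}$ via an intermediate Frobenius-norm computation, and then applies Weyl's inequality $\lambda_{\min}(F+Y)\geqslant\lambda_{\min}(G)-\|F-G+Y\|_{\rm op}$. You instead work in the dual $\min_\Sigma\tr(\Sigma F)$, and bound the operator norm of the Toeplitz residual $H$ directly by decomposing $H=\sum_\tau a(\tau)S_\tau$ into partial-permutation matrices with $\|S_\tau\|_{\rm op}\leqslant 1$, giving $\|H\|_{\rm op}\leqslant\sum_\tau|a(\tau)|\leqslant(2s+1)^d\|h\|_{\rm F}$ from the exact relation $\hat h(\tau)=(2s+1)^{-d}\prod_i(2s+1-|\tau_i|)\,a(\tau)$. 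Your decomposition is more elementary and self-contained than the Frobenius detour; the paper's argument recovers an extra factor $1/\sqrt{2}$ that it does not use in the stated bound, so nothing is lost with your approach for the lemma as formulated.
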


{To obtain the desired approximant $g$, we follow the approach of~\cite{woodworth2022non} and builds an exact representation of $f-f_\ast$ as the sum of squared infinitely differentiable functions. We then truncate the Fourier series of these functions to obtain the approximation.}
 
 {To 
 provide the exact SOS decomposition, following~\cite{rudi2020finding}, we provide a decomposition around $x_\ast$, where the function $f-f_\ast$ has a zero, and away from $x_\ast$, where the function is strictly positive. This is then glued together with ``partitions of unity'' which we now present.
 }

 We consider two infinitely differentiable $1$-periodic functions $u,v: \rb^d \to [0,1]$ such that 
 \[
 \| x - x_\ast\|_\infty \leqslant \frac{\alpha}{2} \Rightarrow u(x)=1 
 \ \mbox{ and }  \  \| x - x_\ast\|_\infty \geqslant \alpha \Rightarrow u(x)=0,
 \]
 and for all $x \in \rb^d$,
 $
 u(x)^2 + v(x)^2 = 1$. See the illustration below in one dimension.
 
 \vspace*{.1cm}
 
 \begin{center}
 \includegraphics[width=8cm]{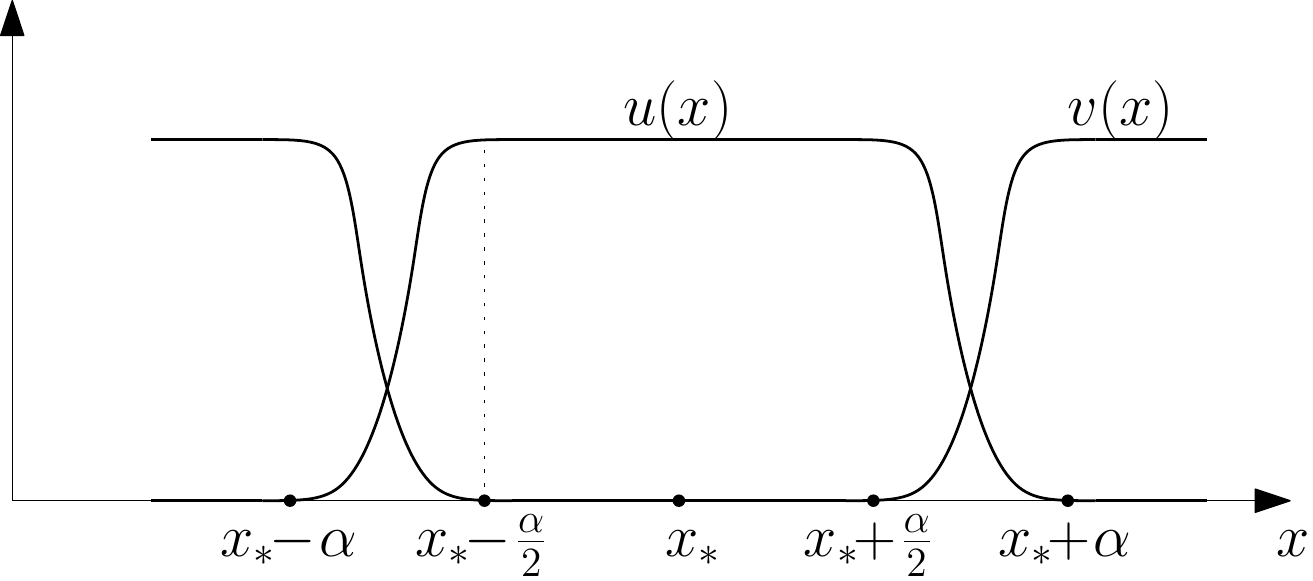}
  \end{center}
 
  \vspace*{.1cm}
  
 These are usually referred to as partitions of the unity and will be built in \mysec{boundpartition} using standard tools.
Following~\cite{rudi2020finding}, we can then decompose $f$ as, using Taylor's formula with integral remainder:
\BEAS
& & f(x) - f(x_\ast)  \\
 & = & v(x)^2 [ f(x) - f(x_\ast)  ] +  u(x)^2 [ f(x) - f(x_\ast)  ]
\\
& = & 
\Big[  v(x) \sqrt{f(x)-f(x_\ast)} \Big]^2
+ \Big[ u(x)^2
\int_0^1 (1-t) (x-x_\ast)^\top f''(x_\ast + t(x-x_\ast)) (x-x_\ast) dt
\Big]\\
& = &\Big[  v(x) \sqrt{f(x)-f(x_\ast)} \Big]^2
+ \Big[ u(x)^2(x-x_\ast)^\top R(x) (x-x_\ast) \Big]
\\[-.1cm]
& = &\Big[  v(x) \sqrt{f(x)-f(x_\ast)} \Big]^2
+ \Big[ u(x)^2 \sum_{i=1}^d (x-x_\ast)^\top R(x)^{1/2} u_i u_i^\top R(x)^{1/2} (x-x_\ast) \Big]
\\[-.25cm]
& = &\Big[  v(x) \sqrt{f(x)-f(x_\ast)} \Big]^2
+ \sum_{i=1}^d\Big[  u(x) (x-x_\ast)^\top R(x)^{1/2} z_i \Big]^2,
\EEAS
with $\ds R(x) = \int_0^1 (1-t)   f''(x_\ast + t(x-x_\ast))dt \succcurlyeq \frac{\lambda}{2}$ if $\| x - x_\ast\|_\infty \leqslant \alpha$, and $(z_1,\dots,z_d) \in \rb^{d \times d}$ any orthonormal basis of $\rb^d$.

We thus get an explicit SOS decomposition with $d+1$ functions as
\[
\forall x \in [0,1]^d, \ f(x) - f(x_\ast) = \sum_{i=1}^{d+1} g_i(x)^2,
\]
with 
\BEA
\label{eq:gi}
g_i(x) & = &  u(x) (x-x_\ast)^\top R(x)^{1/2} z_i \mbox{ for } i \in \{1,\dots,d\}, \\
\label{eq:gd1}
g_{d+1}(x) & = & v(x) \sqrt{f(x)-f(x_\ast)},
\EEA
which are infinitely differentiable functions (just taking the square root of $f-f_\ast$ without taking care of the region around the minimizer as we do above would not lead to a differentiable function).

We consider the truncations $\bar{g}_i$ obtained by keeping in $g_i$ only frequencies such that $\|\omega\|_\infty \leqslant s$, leading to, using lemmas from~\cite{woodworth2022non} about the F-norm (see also \cite[Section I.6]{katznelson2004introduction}):
\BEA
\notag \Big\| f - f_\ast - \sum_{i=1}^{d+1} \bar{g}_i^2 \Big\|_{\rm F} & = &  
\Big\| \sum_{i=1}^{d+1} g_i ^2 - \sum_{i=1}^{d+1} \bar{g}_i^2 \Big\|_{\rm F}
  \leqslant   \sum_{i=1}^{d+1} ( \| g_i\|_{\rm F} +  \| \bar{g}_i\|_{\rm F}  ) \| g_i - \bar{g}_i\|_{\rm F}
\\
\label{eq:boundfinal} & \leqslant & 2 \sum_{i=1}^{d+1}   \| g_i\|_{\rm F}  \cdot  \!\!\!\sum_{\| \omega \| > s} | \hat{g}_i(\omega)|
= 2 \sum_{i=1}^{d+1}   \| g_i\|_{\rm F}  \cdot  \| g_i\|_{{\rm F},s},
\EEA
where we denote $\| f\|_{{\rm F},s} =  \sum_{\| \omega \|_\infty > s} | \hat{ f } (\omega)|$. We thus need to find bounds on
$\| g_i\|_{\rm F} $ and $ \| g_i\|_{{\rm F},s}$, for $i \in \{1,\dots,d+1\}$, and then multiply the bound in \eq{boundfinal} above by the term $(2s+1)^{d}$ from Lemma~\ref{lemma:lemma42}.

Since these functions are $C^\infty$ (i.e., infinitely differentiable), the decay of their Fourier series is faster than any power, as already noted in \cite{woodworth2022non}. In the present paper, we provide explicit constants that allow us to obtain an exponential convergence rate.

{We will obtain bounds on Fourier series coefficients of the functions $g_i$ defined in \eq{gi} and \eq{gd1} by bounding their derivatives. Since they are defined as products, we need to bound the derivatives of each part: the partitions of unity $u$ and~$v$ (\mysec{boundpartition}), the scalar square root $(f-f_\ast)^{1/2}$
(\mysec{boundscalarroot}), and the matrix square root $R^{1/2}$ (\mysec{boundmatrixroot}). The bounds are then put together in \mysec{proofmain}.}

{The key in obtaining bounds on order $m$ derivatives is to track the dependence in $m$, with bounds of the form $c_1^m m! ^{1+c_2}$ for constants $c_1,c_2$.}

\subsection{Partitions of unity}
\label{sec:boundpartition}

Following~\cite[Section 3.1]{israel2015eigenvalue}, we
consider for $\eta \in (0,1]$, the function $a:\rb \to \rb$ defined as $a(x) = \exp(-(1-x^2)^{-1/\eta})$ on $[-1,1]$, and zero otherwise.  We then consider the function $b: \rb \to \rb$, defined as $ b(t) = \frac{\int_{-\infty}^t a(x)dx}{\int_{-\infty}^{+\infty} a(x)dx}$, which is non-decreasing, equal to zero for $t \leqslant 1$, and equal to $1$ if $t \geqslant 1$. These two functions are infinitely differentiable on $\rb$. See the illustrations below.

\vspace*{.1cm}

 \begin{center}
 \includegraphics[width=10.2cm]{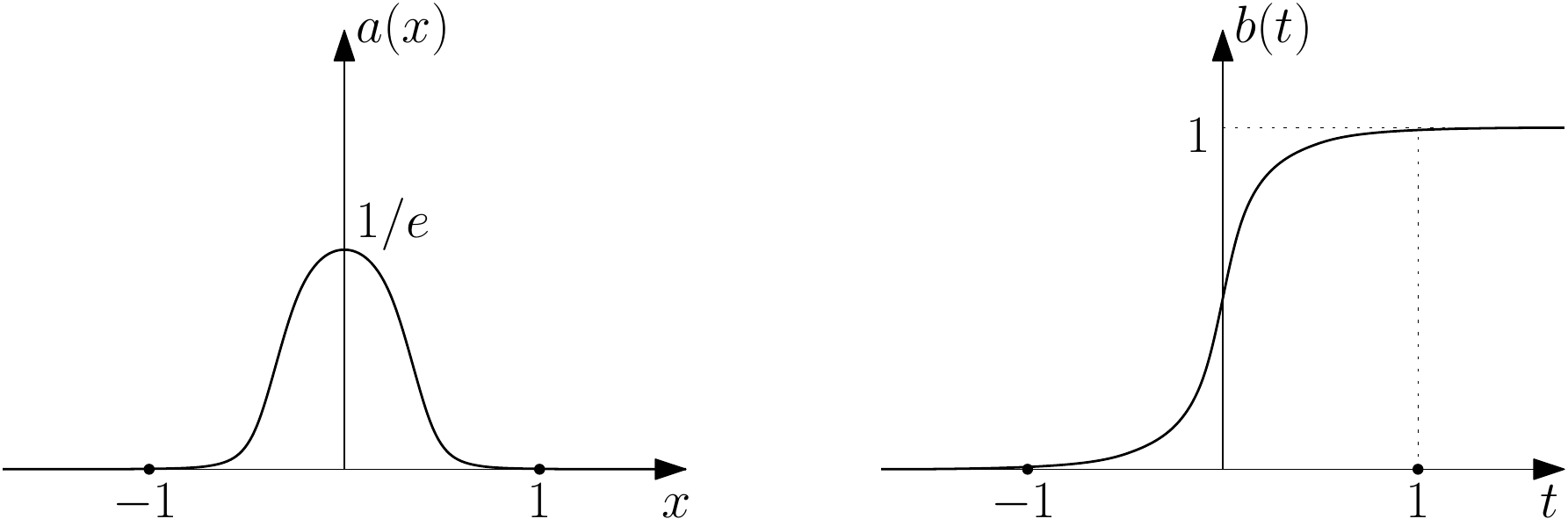}
  \end{center}

\vspace*{.1cm}

  We have, from \cite[Section 3.1]{israel2015eigenvalue},
   $|a^{(m)}(x)| \leqslant  \big( \frac{16}{\eta} \big)^m m^{(1+\eta)m}  $, for any $m\geqslant 0$, and any $x \in [-1,1]$. Moreover, we have
   \BEAS
   \int_{-\infty}^{+\infty} a(x)dx 
   & \geqslant & 2 \int_0^{\sqrt{\eta/2}}
    \exp(-(1-x^2)^{-1/\eta}) dx\\[-.1cm]
    & \geqslant & 
    \sqrt{2\eta}  \exp(-(1-\eta/2)^{-1/\eta})
    = \sqrt{2\eta} \exp\big( - \exp\big( -
    \frac{1}{\eta}\log\big(1-\frac{\eta}{2}\big) \big)\big).
   \EEAS
   Using $\log(1-x) \geqslant - (2 \log 2 )x$ for $x \in [0,1/2]$, we get the lower bound\footnote{Note that the bound from from~\cite{israel2015eigenvalue} is incorrectly independent of $\eta$.}
   \[
    \int_{-\infty}^{+\infty} a(x)dx 
   \geqslant
   \sqrt{2 \eta} \exp\big( - \exp\big( 
       \log 2 \big) \big)\big)  = \sqrt{2} e^{-2} \sqrt{\eta} \geqslant \sqrt{\eta}/8.
       \]
   We consider the function $w$ defined on $[-\frac{1}{2},\frac{1}{2}]$ as $\ds w(x) = b\big[ \frac{4}{\alpha} \big( |x| - \frac{3\alpha}{4}\big) \big]$, and extended by $1$-periodicity to~$\rb$. It is of the form plotted below.

\vspace*{.1cm}
   
    \begin{center}
 \includegraphics[width=8cm]{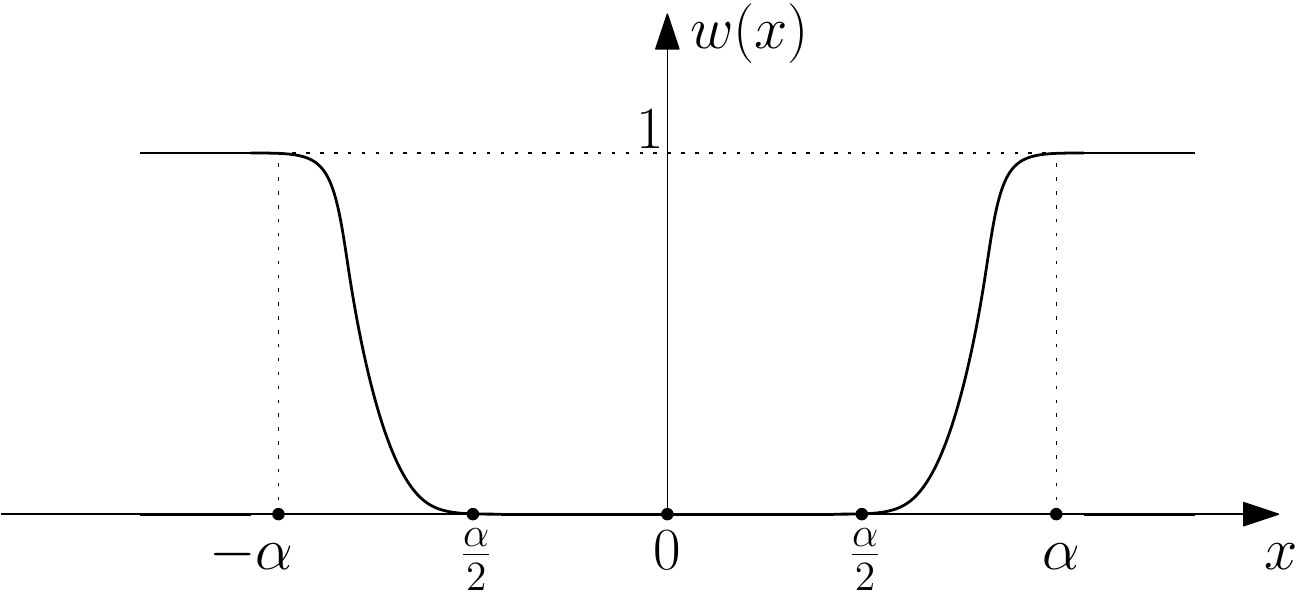}
  \end{center}

\vspace*{.1cm}

   Moreover  we have, {through the explicit expression of $w$ and the bounds on the derivatives of $a$ and $b$}:
   \[
   \forall x \in \big[-\frac{1}{2},\frac{1}{2}\big], \ | w^{(m+1)}(x)|  \leqslant  8 \sqrt{1/\eta} \big( \frac{64}{\alpha \eta} \big)^m m^{(1+\eta)m}  
   ,\]
   which leads to for $m>0$
   \[
   \forall x \in \big[-\frac{1}{2},\frac{1}{2}\big], \ | w^{(m)}(x)|  \leqslant  8 \sqrt{1/\eta} \frac{\alpha \eta}{64}  \big( \frac{64}{\alpha \eta} \big)^m m^{(1+\eta)m} \leqslant    c^m m^{(1+\eta)m}
   ,\]
   with $c = \frac{64}{\alpha \eta} $, an equality which is also valid for $m=0$ (where  $|w(x)|\leqslant 1$).

  We then consider the functions
  \BEA
  \label{eq:u}
  u(x) & = & \sin \Big[  \frac{\pi}{2} \prod_{i=1}^d \big( 1 - w(x_i-(x_\ast)_i)  \big) \Big] \\[-.1cm]
 \label{eq:v}  v(x) & = & \cos \Big[  \frac{\pi}{2} \prod_{i=1}^d \big( 1 - w(x_i-(x_\ast)_i)  \big) \Big].
  \EEA
  These functions satisfy exactly the constraints from \mysec{mainproof}, that is, $u(x)^2 + v(x)^2 = 1$ for all $x$, and, as soon as $\| x - x_\ast\|_\infty \leqslant \alpha/2$, $u(x) = \sin (\pi/2) = 1$, as well as, when $\| x - x_\ast\|_\infty \geqslant \alpha$, $u(x) = 0$. The next lemma provides bounds on their derivatives.
  
  \begin{lemma}
For the function $u$ defined in \eq{u} and \eq{v}, we have for all $m>0$:
\BEA
\label{eq:A}
\|  \nabla^m u\|_\infty & \leqslant &    \Big( \frac{275 }{\alpha \eta }  \Big)^{m}
m! \cdot m^{ \eta m},
\EEA
with the same bound for $v$ in \eq{v}.
\end{lemma}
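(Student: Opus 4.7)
I would bound $\|\nabla^m u\|_\infty$ by working with directional derivatives: writing $g(t) = u(x+t\delta)$ for fixed $x,\delta$ with $\|\delta\|_1 \leqslant 1$, we have $g^{(m)}(0) = \nabla^m u(x)[\delta,\ldots,\delta]$. Since $u = \sin \circ\, \phi$ for $\phi(x) = \tfrac{\pi}{2}\prod_{i=1}^d (1-w(x_i-(x_\ast)_i))$, setting $h(t) = \phi(x+t\delta)$ I get $g = \sin \circ h$ and can attack the problem in two stages: first bound $|h^{(j)}(0)|$ using the product structure of $\phi$, then lift the bound from $h$ to $g$ through Fa\`a di Bruno.

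For the first stage, note that $h(t) = \tfrac{\pi}{2}\prod_{i=1}^d q_i(t)$ with $q_i(t) = 1 - w(x_i + t\delta_i - (x_\ast)_i)$, so $|q_i^{(k)}(t)| \leqslant (c|\delta_i|)^k k^{(1+\eta)k}$ for $k\geqslant 1$ (with $c = 64/(\alpha\eta)$, by the derivative bound on $w$ already established) and $|q_i(t)| \leqslant 1$. The multivariate Leibniz rule plus the crude estimate $k_i^{(1+\eta)k_i} \leqslant j^{(1+\eta)k_i}$ (valid for $k_i\leqslant j$, using the convention $0^0=1$) combine via the multinomial theorem to yield
\[
|h^{(j)}(0)| \leqslant \tfrac{\pi}{2}\, c^j\, j^{(1+\eta)j}\, \|\delta\|_1^j \leqslant \tfrac{\pi}{2}\, c^j\, j^{(1+\eta)j}.
\]

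For the second stage, Fa\`a di Bruno gives $|g^{(m)}(0)| \leqslant \sum_{k=1}^m B_{m,k}(|h^{(1)}|,\ldots,|h^{(m-k+1)}|)$ because $|\sin^{(k)}|\leqslant 1$, where $B_{m,k}$ is the partial Bell polynomial. Substituting the bound on $h^{(i)}$ and using Stirling in the form $i^i/i! \leqslant e^i/\sqrt{2\pi i}$ converts $(x_i/i!)^{j_i}$ into a clean expression involving $(ec)^i$ and $i^{\eta i}$; the inequality $\prod_i i^{\eta i j_i} \leqslant m^{\eta m}$ (valid since $\sum i j_i = m$) pulls out the $m^{\eta m}$ factor, and the classical identity
\[
\sum_{\substack{j_1,j_2,\ldots\geqslant 0\\ \sum j_i = k,\ \sum i j_i = m}} \frac{1}{\prod_i j_i!} = \frac{1}{k!}\binom{m-1}{k-1}
\]
collapses the Bell polynomial to $B_{m,k} \leqslant C_1^k\, c^m m!\, m^{\eta m}\binom{m-1}{k-1}/k!$ for an explicit $C_1$. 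Summing over $k$ via $\sum_k C_1^k \binom{m-1}{k-1}/k! \leqslant C_1(1+C_1)^{m-1}$ produces a bound of the shape $\bigl(C/(\alpha\eta)\bigr)^m m!\, m^{\eta m}$. The bound for $v$ is identical because $|\cos^{(k)}| \leqslant 1$ as well.

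The main obstacle is purely quantitative: getting the constant down to $275/(\alpha\eta)$ rather than a larger numerical value. The structural steps above are robust, but a careless use of $k^k \leqslant e^k k!$ alone yields a constant closer to $450/(\alpha\eta)$; to reach $275$ one must keep the full Stirling factor $\sqrt{2\pi i}$ inside the Bell polynomial estimate, so that the $\pi/2$ from $\sin \circ \phi$ effectively appears as $\pi/(2\sqrt{2\pi})$, and absorb any sub-exponential residual from $\sum_k C_1^k \binom{m-1}{k-1}/k!$ into a harmless $(1+\varepsilon)^m$ factor that is bounded against the slack in the base. All the hard work is in the bookkeeping; no new ideas beyond product rule, Fa\`a di Bruno, and Stirling are required.
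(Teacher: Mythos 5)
Your proposal follows essentially the same route as the paper: bound the directional derivatives of the product $\prod_i(1-w(\cdot))$ via Leibniz/multinomial with the crude replacement $\alpha_i^{(1+\eta)\alpha_i}\leqslant m^{(1+\eta)\alpha_i}$, then lift through Fa\`a di Bruno using $|\sin^{(k)}|\leqslant 1$, the Bell-polynomial homogeneity, and the Lah-number identity $B_{m,k}(1!,\dots,(m-k+1)!)=\frac{(m-1)!}{(k-1)!}\binom{m}{k}$ (your partition-sum identity is the same fact), finishing with a geometric sum over $k$; structurally this is the paper's proof. The only quibble is your constant-chasing recipe: keeping Stirling's $\sqrt{2\pi i}$ makes the per-$k$ factor $\pi/(2\sqrt{2\pi})\approx 0.63$ and your stated bound $\sum_k C_1^k\binom{m-1}{k-1}/k!\leqslant C_1(1+C_1)^{m-1}$ then lands at $64e\big(1+\tfrac{\pi}{2\sqrt{2\pi}}\big)\approx 283>275$, so you would still need the subexponential savings from the retained $1/k!$ plus a small-$m$ check; the paper instead uses $i^i\leqslant i!\,e^{i-1}$, which gives the smaller per-$k$ factor $\pi/(2e)$ and the closed-form constant $64e(1+\pi/(2e))\approx 274.5\leqslant 275$ directly.
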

\begin{proof}
We consider the function $g(t) = u(x+\delta t) = \sin \big[ \frac{\pi}{2} f(t) \big]$, with $f(t) =
\prod_{i=1}^d \big( 1 - w(x_i-(x_\ast)_i)  \big)$. We can expand the derivatives of the product function $f$ using the Leibniz formula to get for all $t$:
\BEAS
|f^{(m)}(t)|
& \leqslant  &\frac{\pi}{2} \bigg|\sum_{\alpha_1+\dots+\alpha_d = m}
{ m \choose \alpha_1,\dots,\alpha_d } \prod_{i=1}^d  c^{\alpha_i} \alpha_i^{(1+\eta)\alpha_i} \delta_i^{\alpha_i}  \bigg|
\\
& \leqslant & \frac{\pi}{2} \sum_{\alpha_1+\dots+\alpha_d = m}
{ m \choose \alpha_1,\dots,\alpha_d } \prod_{i=1}^d
\big[  c  m  ^{1+\eta}  | \delta_i| \big]^{\alpha_i} \\
&  = &  \frac{\pi}{2} \big[   cm^{1+\eta} \|\delta\|_1 \big]^m
\leqslant \frac{\pi}{2e}
  \big[   c e m^{\eta} \|\delta\|_1 \big]^m m! \ \ ,\mbox{ using } m^m \leqslant m! e^{m-1}.
\EEAS

We have, using Fa\`a di Bruno's formula (see, e.g.,~\cite{charalambides2018enumerative})  for the sine function, 
with the Bell polynomials $B_{m,k}$:
\BEAS
|g^{(m)}(t) |
& \leqslant &  \sum_{k=1}^m   B_{m,k} \Big(  \frac{\pi}{2e}
  \big[   ce1^{\eta} \|\delta\|_1 \big]^1 1!,\dots,  \frac{\pi}{2e }
  \big[ ce(m\!-\!k\!+\!1)^{\eta} \|\delta\|_1 \big]^{m\!-\!k\!+\!1} (m-k+1)!\Big)
\\
& \leqslant & \sum_{k=1}^m   B_{m,k} \Big(  \frac{\pi}{2e}
   \big[   c em^{\eta} \|\delta\|_1 \big]^1 1!,\dots,  \frac{\pi}{2e}
  \big[ c em ^{\eta} \|\delta\|_1 \big]^{m-k+1} (m-k+1)!\Big),
\EEAS
using the fact that Bell polynomials have non-negative coefficients (and are thus non-decreasing functions over the positive orthant).
Thus, using that 
\[ B_{m,k}(\alpha \beta z_1,\dots,\alpha \beta^{m-k+1} z_{m-k+1}) 
= \alpha^k \beta^m B_{m,k}(  z_1,\dots,  z_{m-k+1}),
\]
 we get, using an explicit formula for Bell polynomials,\footnote{See a summary of properties in \url{https://en.wikipedia.org/wiki/Bell_polynomials}.}
\BEAS
|g^{(m)}(t) |
& \leqslant & 
\sum_{k=1}^m
\big( \frac{\pi}{2e} \big)^k 
\big[ ce m^{\eta} \|\delta\|_1 \big]^m
  B_{m,k} \big( 1!,\dots,(m-k+1)! \big)
\\
& =  &  \big[ cem^{\eta} \|\delta\|_1 \big]^n
\sum_{k=1}^m 
\big( \frac{\pi}{2e} \big)^k 
 \frac{(m-1)!}{(k-1)!} { m \choose k},
\\
& \leqslant  &  \big[ cem^{\eta} \|\delta\|_1 \big]^n
\sum_{k=1}^m 
\big( \frac{\pi}{2e} \big)^k 
 m! { m \choose k} =  
 \big[ ce m^{\eta} \|\delta\|_1 \big]^m m! ( 1 + \pi / 2e)^m
 \\
 &  \leqslant &  \Big[ \frac{64e(1+\pi/(2e))}{\alpha \eta} m^{\eta} \|\delta\|_1 \Big]^m m! 
 \leqslant \Big[ \frac{275}{\alpha \eta} m^{\eta} \|\delta\|_1 \Big]^m m!  \ ,
\EEAS
which leads to $| \nabla^m u[ \delta,\dots,\delta]| \leqslant     \big[ \frac{275}{\alpha \eta} m^{\eta} \|\delta\|_1 \big]^m m!$, and thus to the desired result.
\end{proof}

   \subsection{Scalar square root}
\label{sec:boundscalarroot}
 {Since our SOS decomposition relies on the square root of the function $f-f_\ast$ for the function $g_{d+1}$ in \eq{gd1}, we need to bound square roots of functions which are strictly positive and bounded away from zero.} By applying  Lemma~\ref{lemma:squareroot} below to the function $g: t \mapsto f(x+t\delta) - f(x_\ast)$, for an arbitrary $\delta \in  \rb^d$ such that $\| \delta\|_1 \leqslant 1$ and $\| x - x_\ast\|_\infty \geqslant\frac{\alpha}{2}$, with $c = \beta$, $ C = \| f - f_\ast\|_{\rm F}$  and $ D = 4\pi r \|\delta\|_1$, we obtain that for $h: x \mapsto \sqrt{ f(x) - f_\ast}$:
\BEQ
 \label{eq:H}
  \| \nabla ^m h(x) \|_\infty = \max_{\| \delta\|_1 \leqslant 1 } |\nabla ^m h(x)[\delta,\dots, \delta]|
  \leqslant 3 \beta^{1/2} \Big( \frac{ 8 \pi r\| f - f_\ast\|_{\rm F}}{\beta} \Big)^k k! .
 \EEQ 
\begin{lemma}
\label{lemma:squareroot}
We consider a $C^\infty$ function $g$ defined on a neighborhood of zero (on the real line) such that $g(0) \geqslant c > 0 $ and such that for all $m \in \mathbb{N}$, 
$
|g^{(m)}(0)| \leqslant C \cdot D^m,
$
with $C \geqslant c$. For $b(t) = \sqrt{g(t)}$, we have:
$
|b^{(k)} (0)| \leqslant   3   c^{1/2}  \big(2 D \frac{C}{c} \big)^{  k }  k! \ $.
\end{lemma}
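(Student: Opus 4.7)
The plan is to apply Faà di Bruno's formula to the composition $b = \phi \circ g$ with $\phi(y) = \sqrt{y}$, in parallel with the Bell-polynomial technique used in the preceding partitions-of-unity lemma. Faà di Bruno gives
\[
b^{(k)}(0) = \sum_{j=1}^k \phi^{(j)}(g(0))\, B_{k,j}\!\bigl(g'(0),g''(0),\dots,g^{(k-j+1)}(0)\bigr),
\]
where $B_{k,j}$ denotes the partial Bell polynomial. For the scalar square root, $\phi^{(j)}(y) = \binom{1/2}{j} j!\, y^{1/2-j}$ with $|\binom{1/2}{j}| \leq 1/(2j)$ coming from the closed form $\binom{1/2}{j} = (-1)^{j-1}(2j-2)!/(2^{2j-1}(j-1)!\,j!)$ together with $\binom{2j-2}{j-1} \leq 4^{j-1}$; combined with $g(0) \geq c$ this yields $|\phi^{(j)}(g(0))| \leq \tfrac{(j-1)!}{2}\, c^{1/2-j}$ for $j\geq 1$. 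For the Bell polynomial, the hypothesis $|g^{(m)}(0)| \leq C D^m$ and the homogeneity $B_{k,j}(\alpha\beta x_1, \alpha\beta^2 x_2, \dots) = \alpha^j \beta^k B_{k,j}(x_1, x_2, \dots)$ yield $|B_{k,j}(g'(0), \dots)| \leq C^j D^k\, S(k,j)$, where $S(k,j) = B_{k,j}(1, 1, \dots, 1)$ is a Stirling number of the second kind. Assembling,
\[
|b^{(k)}(0)| \;\leq\; \tfrac{1}{2}\, c^{1/2}\, D^k\, T(k, C/c), \qquad T(k, x) \;:=\; \sum_{j=1}^k (j-1)!\, S(k,j)\, x^j.
\]

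The crux is estimating $T(k, x)$ with $x = C/c \geq 1$. I would exploit the exponential generating function identity
\[
\sum_{k \geq 1} T(k, x)\, \frac{z^k}{k!} \;=\; -\log\!\bigl(1 - x(e^z - 1)\bigr),
\]
which follows from the standard EGF $\sum_{k \geq j} S(k, j)\, z^k/k! = (e^z - 1)^j/j!$ combined with $-\log(1 - u) = \sum_{j \geq 1} u^j/j$. Applying Cauchy's estimate on the disk of radius $r = \log(1 + \gamma/x)$ with $\gamma \in (0,1)$ (so that $|x(e^z - 1)| \leq \gamma$ on $|z| = r$, and hence $|{-}\log(1 - x(e^z-1))| \leq \log(1/(1-\gamma))$) gives
\[
T(k, x) \;\leq\; k!\, \log\!\bigl(1/(1-\gamma)\bigr)\, r^{-k} \;\leq\; k!\, \log\!\bigl(1/(1-\gamma)\bigr)\, (2x/\gamma)^k,
\]
using $\log(1+y) \geq y/2$ for $y \in [0,1]$. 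Optimizing over $\gamma$ and absorbing the resulting absolute constant then delivers the claimed bound $|b^{(k)}(0)| \leq 3\, c^{1/2} (2DC/c)^k k!$.

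The main obstacle is numerical: obtaining precisely the base $(2DC/c)^k$ (rather than a larger multiple) together with the small prefactor $3$ requires pushing $\gamma$ close to $1$ so that $2/\gamma$ is near $2$, while simultaneously keeping $\log(1/(1-\gamma))$ small. An alternative route would be to induct on $k$ using the recursion $2b(0) b^{(k)}(0) = g^{(k)}(0) - \sum_{i=1}^{k-1}\binom{k}{i}\, b^{(i)}(0)\, b^{(k-i)}(0)$ obtained by differentiating $b^2 = g$; but this needs a carefully chosen inductive hypothesis with a polynomial correction factor (e.g.\ a $k^{-3/2}$-type weight) to control the convolution of the $b^{(i)}$'s.
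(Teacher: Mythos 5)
Your derivation up to the estimate $|b^{(k)}(0)| \leqslant \tfrac12 c^{1/2} D^k\, T(k,C/c)$ with $T(k,x)=\sum_{j=1}^k (j-1)!\,S(k,j)\,x^j$ is correct and is essentially the paper's own argument (Fa\`a di Bruno, homogeneity of the Bell polynomials, reduction to Stirling numbers of the second kind); your bound $|\phi^{(j)}(g(0))|\leqslant \tfrac{(j-1)!}{2}c^{1/2-j}$ is even slightly sharper than the paper's $j!\,c^{1/2-j}$. The genuine gap is exactly where you flag the ``numerical obstacle'': once you weaken the radius $r=\log(1+\gamma/x)$ to $\gamma/(2x)$, you are left with the bound $\tfrac12\log\frac{1}{1-\gamma}\,\gamma^{-k}(2x)^k k!$, and no choice of $\gamma\in(0,1)$, fixed or $k$-dependent, keeps $\log\frac{1}{1-\gamma}\,\gamma^{-k}$ below an absolute constant for all $k$ (for instance $\gamma=1-1/k$ gives a factor growing like $e\log k$). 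So ``optimizing over $\gamma$'' as written cannot deliver the base $(2DC/c)^k$ with prefactor $3$.

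The fix is one line and needs no new idea: do not pre-weaken the radius, but take it equal to $1/(2x)$, i.e.\ $\gamma=x(e^{1/(2x)}-1)$. By convexity of $u\mapsto e^u$ on $[0,1/2]$, for $x\geqslant 1$ one has $x(e^{1/(2x)}-1)\leqslant \sqrt{e}-1<1$, and since $T$ has nonnegative EGF coefficients you may simply evaluate $-\log(1-x(e^z-1))$ at the positive real point $z=1/(2x)$ (no Cauchy estimate needed), getting $T(k,x)\leqslant k!\,(2x)^k\log\frac{1}{2-\sqrt{e}}\leqslant 1.05\,(2x)^k k!$ and hence $|b^{(k)}(0)|\leqslant 0.53\, c^{1/2}(2DC/c)^k k!\leqslant 3\,c^{1/2}(2DC/c)^k k!$. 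This is the paper's finish in disguise: there, $C\geqslant c$ is used to pull $(C/c)^k$ out of the sum, reducing it to the ordered Bell number $A_k=\sum_i i!\,S(k,i)$, which is bounded via its generating function $1/(2-e^z)$ evaluated at $z=1/2$, yielding the constant $1/(2-\sqrt{e})\leqslant 3$. With this adjustment your route is complete; the alternative recursion from $b^2=g$ is not needed.
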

\begin{proof}
We will use Fa\`a di Bruno's formula (see, e.g.,~\cite{charalambides2018enumerative}), with the $k$-th derivative of $y \mapsto \sqrt{y}$ being $y^{\frac{1}{2}-k} (-1)^{k-1} \frac{1}{2k-1} \frac{ (2k)!}{(k)! 2^{2k}} = y^{\frac{1}{2}-k} b_k =  y^{\frac{1}{2}-k} k! C_{k-1} 2^{1-2k}$ for $k>0$, where $C_k = \frac{1}{k+1} { 2k \choose k}$ is the Catalan number. Using the classical bound $ C_n =  \frac{1}{n+1} \frac{(2n)!}{(n!)^2} \leqslant 2 \cdot 4^n$, we get
$
|b_k |  \leqslant
k!.
$
Fa\`a di Bruno's formula leads to, with the Bell polynomials $B_{k,i}$, and Stirling numbers of the second kind $s(k,i)$:
\BEAS
b^{(k)}(0)
& = & \sum_{i=1}^k g(0)^{\frac{1}{2} - i } b_{i}B_{k,i}(g'(0),\dots,g^{(k-i+1)}(0)).
\EEAS
This leads to 
\BEAS
| b^{(k)}(0)  | 
& \leqslant & \sum_{i=1}^k c^{\frac{1}{2} - i } i! B_{k,i}(C D,C D^2,\dots,C  D^{k-i+1})
= \sum_{i=1}^k D^k C^ i c^{\frac{1}{2} - i } i! B_{k,i}(1,1,\dots,1)
\\[-.25cm]
& = & D^k \sqrt{c} \sum_{i=1}^k  \Big(\frac{C}{c} \Big)^{  i }i!  |s(k,i)| \mbox{ using properties of Bell polynomials,}\\[-.2cm]
& \leqslant & D^k c^{1/2} \Big(\frac{C}{c} \Big)^{  k }  \sum_{i=0}^k   i!  |s(k,i)| \mbox{ which is the  ordered Bell number } A_k,
\\[-.05cm]
& \leqslant &  3   c^{1/2}  \Big(2 D \frac{C}{c} \Big)^{  k }  k! \ \, \EEAS
using the bound $A_k \frac{x^k}{k!} \leqslant \frac{1}{2-e^x}$, taken at $x = 1/2$.
\end{proof}

\subsection{Matrix square root}
\label{sec:boundmatrixroot}
{Since our SOS decomposition relies on matrix square roots for the functions $g_{1},\dots,g_d$ in \eq{gi}, we need the following lemma ($\| M\|_{\rm op}$ denotes the largest singular value of the matrix $M$), which can be seen as a matrix extension of Lemma~\ref{lemma:squareroot}.}
\begin{lemma}
\label{lemma:sqrtmat}
We consider a $C^\infty$ function $G: \rb \to \rb^{d \times d}$ with values in positive semidefinite matrices and defined on a neighborhood of zero (on the real line) such that $G(0) \succcurlyeq  c \idm$, with $c> 0$, and such that for all $m \in \mathbb{N}$, 
$
\| G^{(m)}(0) \|_{\rm op} \leqslant C \cdot D^m,
$
with $C \geqslant c$. For $h(x) = \tr [ M G(x)^{1/2} ]$, with $M$ a symmetric matrix such that $\| M\|_{\rm op}=1$, we have $
|h^{(k)} (0)| \leqslant   3   c^{1/2}  \big(2 D \frac{C}{c} \big)^{  k }  k!$.
\end{lemma}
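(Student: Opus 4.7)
The plan is to mimic the scalar Fa\`a di Bruno computation of Lemma~\ref{lemma:squareroot} in the matrix setting, using the holomorphic functional calculus to express $G(x)^{1/2}$ as a contour integral of resolvents of $G(x)$. This reduces differentiating the matrix square root to iterating the resolvent derivative identity, which is algebraically clean and parallels the use of Fa\`a di Bruno's formula in the scalar case. Since by hypothesis $G(0)\succcurlyeq cI$ and $\|G(0)\|_{\rm op}\leq C$, the spectrum of $G(0)$ lies in $[c,C]$; I would fix a simple closed contour $\Gamma$ encircling $[c,C]$ at distance of order $c$ from both $[c,C]$ and the origin (e.g.\ a circle centered at $(c+C)/2$ of radius $(C-c)/2+c/4$), so that $\|(zI-G(0))^{-1}\|_{\rm op}=O(1/c)$ and $|z^{1/2}|=O(\sqrt{C})$ on $\Gamma$. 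The Dunford--Riesz formula then gives
\[
G(x)^{1/2}=\frac{1}{2\pi i}\oint_\Gamma z^{1/2}(zI-G(x))^{-1}\,dz,
\]
valid in a complex neighborhood of $0$, because the bound $\|G(x)-G(0)\|_{\rm op}\leq C(e^{D|x|}-1)$ (implied by the hypothesis via Taylor expansion) together with perturbation theory guarantees that the spectrum of $G(x)$ stays inside $\Gamma$.

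Next I would differentiate $k$ times in $x$ under the integral and iterate $\partial_x(zI-G(x))^{-1}=(zI-G(x))^{-1}G'(x)(zI-G(x))^{-1}$; with $R=R(z)=(zI-G(0))^{-1}$ this produces a sum, over compositions $(a_1,\ldots,a_j)$ of $k$ with $a_i\ge 1$, of terms
\[
\binom{k}{a_1,\ldots,a_j}\,R\,G^{(a_1)}(0)\,R\,G^{(a_2)}(0)\,R\cdots R\,G^{(a_j)}(0)\,R,
\]
inserted inside $\frac{1}{2\pi i}\oint_\Gamma z^{1/2}(\cdot)\,dz$. Taking the trace against $M$, using $|\tr(MA)|\leq\|M\|_{\rm op}\cdot\tr|A|$ together with $\|R\|_{\rm op}=O(1/c)$ on $\Gamma$ and $\|G^{(a_i)}(0)\|_{\rm op}\leq CD^{a_i}$, each individual term is bounded by $O(c^{-1})\cdot D^k(C/c)^j\cdot\binom{k}{a_1,\ldots,a_j}$. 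Summing over compositions collapses to the ordered-Bell structure $\sum_{j=1}^k j!\,S(k,j)\,(C/c)^j$, which is exactly the combinatorial quantity already estimated in the scalar proof and is bounded by $3\cdot 2^k k!\,(C/c)^k$ via the generating function $\sum_k A_k x^k/k!=1/(2-e^x)$ evaluated at $x=1/2$. Multiplying by $\mathrm{length}(\Gamma)\cdot\max_\Gamma|z^{1/2}|$ and absorbing numerical constants produces the desired bound $3c^{1/2}(2DC/c)^k k!$.

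The main obstacle is keeping the constants tight enough to match the scalar bound exactly: the contour $\Gamma$ must be placed at distance $\Theta(c)$ from both the spectrum of $G(0)$ and the origin so that $\|R\|_{\rm op}$ and $|z^{1/2}|$ on $\Gamma$ are simultaneously of orders $1/c$ and $\sqrt{c}$, which is only possible because the hypothesis $G(0)\succcurlyeq cI$ separates the spectrum from $0$ by exactly $c$. A secondary subtlety is the trace inequality: the generic bound $|\tr(MA)|\leq\|M\|_{\rm op}\cdot\tr|A|\leq d\,\|M\|_{\rm op}\|A\|_{\rm op}$ introduces a factor of $d$, but in the intended use of this lemma $M$ will be the rank-one matrix arising from the vector quadratic form $(x-x_\ast)^\top R(x)^{1/2}z_i$ in \eqref{eq:gi}, so the sharper bound $|\tr(uv^*A)|\leq\|u\|\|v\|\|A\|_{\rm op}=\|M\|_{\rm op}\|A\|_{\rm op}$ applies and no dimensional factor is lost. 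Once these two points are handled, the combinatorial step is an essentially verbatim repetition of the scalar Fa\`a di Bruno argument, the matrix structure entering only through the insertion of resolvents between successive derivatives of $G$.
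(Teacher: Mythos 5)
Your plan — use the Dunford--Riesz formula $G(x)^{1/2}=\frac{1}{2\pi i}\oint_\Gamma z^{1/2}(zI-G(x))^{-1}\,dz$ and expand the resolvent by Neumann series — is a genuinely different route from the paper, which instead applies Lemma~\ref{lemma:fadi} together with \cite[Theorem~1.1]{del2018taylor} to transfer the \emph{exact} scalar derivative bound $\alpha(k)=k!\,C_{k-1}\,2^{1-2k}\,c^{1/2-k}$ to the matrix setting, and then repeats the scalar Fa\`a di Bruno computation verbatim. Your combinatorics (compositions of $k$, the $j!\,S(k,j)$ weights, the ordered Bell number and the generating-function bound at $x=1/2$) is correct and matches the scalar argument.

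There is, however, a concrete gap in the constants, and the point at which it enters is precisely the one you flag as a ``main obstacle'' and then dismiss. You assert the contour can be chosen at distance $\Theta(c)$ from \emph{both} the spectrum and the origin so that $|z^{1/2}|=O(\sqrt c)$ on $\Gamma$. That is impossible: $\Gamma$ must enclose $[c,C]$, so there is a point of $\Gamma$ with $|z|>C$, hence $\max_\Gamma |z^{1/2}|\geqslant\sqrt C$, and similarly $\mathrm{length}(\Gamma)\gtrsim C$. With your contour (center $(c+C)/2$, radius $(C-c)/2+c/4$), the distance from $\Gamma$ to $[c,C]$ is $c/4$, giving $\|R\|_{\rm op}\leqslant 4/c$; a term with $j$ insertions then contributes $\|R\|_{\rm op}^{j+1}C^jD^k\leqslant \frac{4}{c}(4C/c)^jD^k$, and summing gives $\frac{4}{c}D^k(4C/c)^kA_k\leqslant\frac{12}{c}(8DC/c)^k\,k!$ for the $k$-th derivative of the resolvent. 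After multiplying by $\mathrm{length}(\Gamma)/(2\pi)\cdot\max_\Gamma|z^{1/2}|=O(C^{3/2})$, the final bound has the form $O(C^{3/2}/c)\,(\kappa DC/c)^k\,k!$ with $\kappa\geqslant 8$, not $3c^{1/2}(2DC/c)^k\,k!$: the prefactor is off by roughly $(C/c)^{3/2}$, and the base of the geometric growth is off by a factor that gets raised to the $k$-th power. Both discrepancies come from the same source --- the resolvent-distance constant $\kappa>1$ and the need for the contour to span $[c,C]$ --- and neither can be eliminated by a different choice of $\Gamma$. So your route does yield an explicit exponential bound, but a strictly weaker one than stated, while the paper's route via \cite{del2018taylor} (which establishes that the matrix square root's Fr\'echet derivatives satisfy the \emph{same} bound as the scalar square root's ordinary derivatives at the smallest eigenvalue) is what produces the claimed constants.
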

\begin{proof}
We use results from~\cite{del2018taylor} and Lemma~\ref{lemma:fadi} below, with the operator norm on the set of symmetric matrices and the symmetric square root, where \cite[Theorem 1.1]{del2018taylor} exactly shows that we can take $\alpha(k) = \frac{k! C_{k-1} }{2^{2k-1}} c^{1/2-k}$, which is exactly the bound on $k$-th derivative of the square root which we used in Lemma~\ref{lemma:squareroot} above. Thus, the exact same derivations can be applied.
\end{proof}

\begin{lemma}
\label{lemma:fadi}
We consider functions $f: \rb^a \to \rb$ and $g: \rb \to \rb^a$, and $\varphi = f \circ g: \rb \to \rb$ that are infinitely differentiable. For a certain norm $\| \cdot\|$ on $\rb^a$, we assume that
\[
| \nabla^k f(g(0))[\delta_1,\dots,\delta_k] | \leqslant \alpha(k) \|\delta_1\| \cdots \| \delta_k\|,
\]
for some $\alpha(k)>0$.
Then for any $n \geqslant 1$, for the Bell polynomials $B_{n,k}$, we have:
\[
|\varphi^{(n)}(0)| 
\leqslant \sum_{k=1}^n \alpha(k) B_{n,k}\big( \| g^{(1)}(0)\|, \dots, \| g^{(n-k+1)}(0)\| \big).
\]
\end{lemma}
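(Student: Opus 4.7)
The plan is to invoke the multivariate Faà di Bruno formula for the scalar function $\varphi = f \circ g$, where $g$ is vector-valued and $f$ is scalar-valued on $\rb^a$. In its combinatorial (set-partition) form, this formula reads
\[
\varphi^{(n)}(0) \;=\; \sum_{k=1}^n \sum_{\pi \in \mathcal{P}(n,k)} \nabla^k f(g(0))\big[\,g^{(|B_1|)}(0),\, \ldots,\, g^{(|B_k|)}(0)\,\big],
\]
where $\mathcal{P}(n,k)$ denotes the set of partitions of $\{1,\ldots,n\}$ into $k$ nonempty blocks $B_1,\ldots,B_k$, and $|B_i|$ is the size of the $i$-th block. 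This is the version of Faà di Bruno that naturally interacts with the tensor structure of $\nabla^k f$: each of the $k$ slots of the symmetric $k$-tensor $\nabla^k f(g(0))$ receives the derivative of $g$ corresponding to one block of the partition.

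From there, I would take absolute values term by term and apply the hypothesis on $\nabla^k f(g(0))$ with $\delta_i = g^{(|B_i|)}(0)$, yielding
\[
\big|\nabla^k f(g(0))[g^{(|B_1|)}(0),\, \ldots,\, g^{(|B_k|)}(0)]\big| \;\leqslant\; \alpha(k)\,\prod_{i=1}^k \|g^{(|B_i|)}(0)\|.
\]
Then I would sum over $\pi \in \mathcal{P}(n,k)$ and recognize the combinatorial definition of the Bell polynomial,
\[
B_{n,k}(x_1, \ldots, x_{n-k+1}) \;=\; \sum_{\pi \in \mathcal{P}(n,k)} \prod_{i=1}^k x_{|B_i|},
\]
specialized to $x_j = \|g^{(j)}(0)\|$. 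Summing the resulting bound over $k$ from $1$ to $n$ gives exactly the stated inequality.

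The only real obstacle is justifying the set-partition form of the chain rule for a vector-valued inner function; the scalar Faà di Bruno formula needs to be lifted so that each block contributes one derivative of $g$ viewed as a vector and is then contracted against one of the $k$ arguments of $\nabla^k f$. This is classical (it can be verified by induction on $n$, differentiating the previous step and using that differentiating $g^{(|B_i|)}$ either increases the size of block $B_i$ by one, or splits off a new singleton block corresponding to the new derivative slot in $\nabla^{k+1} f$). Once this expansion is in hand, the remainder of the argument is pure bookkeeping.
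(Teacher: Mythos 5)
Your proposal is correct and takes essentially the same route as the paper: both rely on the observation that every coefficient in the chain-rule expansion of $\varphi^{(n)}(0)$ is a \emph{non-negative} combinatorial multiple of a term $\nabla^k f(g(0))[g^{(\alpha_1)}(0),\dots,g^{(\alpha_k)}(0)]$, so the tensor bound can be applied term by term and the resulting sum is recognized as a Bell polynomial. The only cosmetic difference is that you invoke the set-partition form of the multivariate Fa\`a di Bruno formula directly, whereas the paper re-derives the same coefficient structure via Taylor expansions of $f$ and $g$ and then matches it to the univariate Fa\`a di Bruno formula with surrogate scalar derivatives $\alpha(k)$ and $\|g^{(k)}(0)\|$.
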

\begin{proof}
We follow the proof of Fa\`a di Bruno's formula that considers a Taylor expansion of $g$ around zero as, for any $m >0$:
$ 
g(t) - g(0) = \sum_{k = 1}^m \frac{t^k}{k!} g^{(k)}(0),
$
and of $f$ around $g(0)$, as
$ 
f(g(0)+\delta)
- f(g(0)) = \sum_{k=1}^m \frac{1}{k!} \nabla^k f(g(0)) [ \delta, \dots, \delta].
$
Thus $f(g(t))$ can be expanded as a polynomial in $t$, with coefficients composed of factors of the form 
$ c \nabla^{k} f(g(0))[g^{\alpha_1}(0),\dots,g^{\alpha_k}(0) ]$, with a \emph{non-negative} coefficient $c$. Each of them can then be bounded by the term
 $c \alpha(k) \|g^{\alpha_1}(0)\| \cdots \| g^{\alpha_k}(0)\|$, which is then equivalent to the formula obtained by applying the univariate Fa\`a di Bruno's formula, with a function with derivatives $\alpha(k)$, and the other one with derivatives $\|g^{k}(0)\|$. We then use the usual formulation with Bell polynomials.
\end{proof}

We can now apply it to bound derivatives of $g: x \mapsto (x-x_\ast)^\top R(x)^{1/2} z$ for $z \in \rb^d$. We consider $\varphi(t) = g(x+t\delta)$. We have, using the Leibniz formula:
\BEQ
\label{eq:Q}
\varphi^{(m)}(0) = (x-x_\ast)^\top \frac{\partial^m }{\partial t^m}
R(x+t\delta)^{1/2} z
+ m \delta^\top \frac{\partial^{m-1} }{\partial t^{m-1}}
R(x+t\delta)^{1/2} z.
\EEQ
We have, from expressions in \mysec{mainproof}, with $h(t) = R(x+t\delta) $,
\[
 h(t) = R(x+t\delta) = \int_0^1 (1-s)   f''(x_\ast + s(x+t\delta-x_\ast))ds,
\]
with derivatives which can be computed as, for any $v \in \rb^d$:
\[
 v^\top h^{(m)}(0) v = \int_0^1 (1-s)   \nabla^{m+2}f(x_\ast + s(x+t\delta-x_\ast))[ \delta, \dots,\delta,v,v] s^m ds.
 \]
Using assumptions from Theorem~\ref{theo:2}, in operator norm, $h^{(m)}(0)$ is less than the supremum over $\|v\|_2=1$ of (using $\| v\|_1^2 \leqslant d \|v\|_2^2$ and  integration):
\[
 \| f - f_\ast\|_{\rm F} (4\pi r )^{m+2}\|\delta\|_1^m \| v\|_1^2
\int_0^1 (1-s) s^m ds
\leqslant \| f - f_\ast\|_{\rm F} (4\pi r )^{m+2}\|\delta\|_1^m \frac{d}{m^2}.
\]
{Moreover, we have 
$h(t) \succcurlyeq \int_0^1 (1-s) \lambda \idm ds = \frac{\lambda}{2} \idm$.}
This leads to constants $c = \frac{\lambda}{2}$, $C =  \| f - f_\ast\|_{\rm F} (4\pi r)^2  d $ and
$D = 4 \pi r \| \delta\|_1$ for the function $h$, and thus, to the function $g: x \mapsto (x-x_\ast)^\top R(x)^{1/2} z$, with all derivatives of order $m$ less than (using Lemma~\ref{lemma:sqrtmat} and \eq{Q}):
\BEA
\notag & & \sqrt{d} \cdot 3 \sqrt{\lambda/2} \Big( \frac{4}{\lambda} \| f \!-\! f_\ast\|_{\rm F} (4\pi r)^3  d \Big)^m \! m!
+ m \cdot 3 \sqrt{\lambda/2} \Big( \frac{4}{\lambda} \| f\!- \! f_\ast\|_{\rm F} (4\pi r)^3  d \Big)^{m-1}\! (m\!-\!1)!
 ,
 \EEA
 which is less than
 \BEQ
 \label{eq:I}
 d \sqrt{\lambda}
\Big( \frac{6}{\lambda} \| f - f_\ast\|_{\rm F} (4\pi r)^3  d \Big)^m m! \ \ .
\EEQ

\subsection{Precise bound}
\label{sec:proofmain}
{We start with bounds on all derivatives of functions $g_i$, $i=1,\dots,d+1$, defined in \eq{gi} and \eq{gd1}, and then translate them into bounds on their Fourier series coefficients and thus $\|g_i\|_{\rm F}$ and $\|g_i\|_{\rm F,s}$.}

 \label{sec:details}
\label{sec:detail}
{To get our bound, we first realize that all of these functions are products of two functions, and thus we can use 
Lemma~\ref{lemma:prod} below, proved in Appendix~\ref{app:proofder}, that bounds derivatives of products.}

\begin{lemma}[Derivatives of products]
\label{lemma:prod}
\label{lemma:product}
Assume that $h_1,h_2: [0,1]^d \to \rb$ is $C^\infty$ and such  $\forall m \geqslant 0$, 
$\| \nabla^m h_1\|_\infty   \leqslant  C_1 \cdot B_1^m     \cdot m! \cdot \kappa_1(m)  ,
$ and $\| \nabla^m h_2\|_\infty  \leqslant  C_2 \cdot B_2^m \cdot  m! \cdot \kappa_2(m)  ,
$
Then 
$
\| \nabla^m (h_1 h_2) \|_\infty  \leqslant  C_1 C_2 \kappa_1(m) \kappa_2(m)     (m+1)! \max \{ B_1,B_2\}^m.
$
\end{lemma}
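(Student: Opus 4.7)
The plan is to attack this via the Leibniz product rule applied to the directional derivative. Fix $x \in [0,1]^d$ and $\delta$ with $\|\delta\|_1 \leqslant 1$, and set $g(t) = h_1(x+t\delta) \cdot h_2(x+t\delta)$. Then $g^{(m)}(0) = \nabla^m(h_1 h_2)(x)[\delta,\dots,\delta]$, and the univariate Leibniz formula gives
\[
g^{(m)}(0) = \sum_{k=0}^m \binom{m}{k} \bigl.\tfrac{d^k}{dt^k} h_1(x+t\delta)\bigr|_{t=0} \cdot \bigl.\tfrac{d^{m-k}}{dt^{m-k}} h_2(x+t\delta)\bigr|_{t=0}.
\]
Each of the two factors is a directional derivative of $h_i$ contracted against $\|\delta\|_1 \leqslant 1$, so by the hypothesis of the lemma they are bounded in absolute value by $C_i B_i^{k} k! \kappa_i(k)$ and $C_2 B_2^{m-k}(m-k)!\kappa_2(m-k)$ respectively.

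Plugging these bounds in, the factorials telescope with the binomial coefficient:
\[
|g^{(m)}(0)| \leqslant C_1 C_2 \, m! \sum_{k=0}^{m} B_1^k B_2^{m-k} \kappa_1(k)\kappa_2(m-k).
\]
At this point I would invoke monotonicity of $\kappa_1,\kappa_2$ (which holds for the two functions used in the paper, namely $\kappa(m)=m^{\eta m}$ and the trivial $\kappa \equiv 1$, and should be made explicit as part of the hypothesis or verified in context) to replace $\kappa_1(k)\kappa_2(m-k)$ by $\kappa_1(m)\kappa_2(m)$. Then I bound $B_1^k B_2^{m-k} \leqslant \max\{B_1,B_2\}^m$ uniformly in $k$, so the sum over $k \in \{0,\dots,m\}$ contributes a factor of $(m+1)$. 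Combining $(m+1)\cdot m! = (m+1)!$ yields
\[
|g^{(m)}(0)| \leqslant C_1 C_2 \, \kappa_1(m)\kappa_2(m)\,(m+1)!\,\max\{B_1,B_2\}^m,
\]
and taking the sup over $x$ and over $\|\delta\|_1 \leqslant 1$ gives the claimed $\|\nabla^m(h_1 h_2)\|_\infty$ bound.

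The only nontrivial bookkeeping step is the monotonicity of $\kappa_i$; this is the place where the stated lemma is a little loose since it does not explicitly list that hypothesis, but it is harmless in our applications. Everything else is routine: Leibniz, the factorial identity $\binom{m}{k} k!(m-k)! = m!$, a uniform bound on the geometric factor $B_1^k B_2^{m-k}$, and a counting factor of $(m+1)$ for the number of terms. No subtle analysis is required.
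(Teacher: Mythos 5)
Your proof follows exactly the same route as the paper's: Leibniz on $t \mapsto h_1(x+t\delta)h_2(x+t\delta)$, the telescoping $\binom{m}{k}k!(m-k)! = m!$, a uniform bound $B_1^k B_2^{m-k} \leqslant \max\{B_1,B_2\}^m$, and the count of $m+1$ terms. Your remark that the replacement $\kappa_1(k)\kappa_2(m-k) \leqslant \kappa_1(m)\kappa_2(m)$ silently requires $\kappa_1,\kappa_2$ to be non-decreasing is correct — the paper also performs this step without stating the hypothesis, and it holds for the $\kappa$'s used ($\kappa(m)=m^{\eta m}$ and $\kappa\equiv 1$), so this is a fair and accurate observation rather than a flaw in your argument.
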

With the estimates in \eq{A} and \eq{I}, we get:
\BEQ
\label{eq:AAA}
\forall i \in \{1,\dots,d\}, \ 
\| \nabla^m g_{i} \|_\infty   \leqslant 
d \sqrt{\lambda}
\max\Big\{ \frac{275 }{\alpha \eta} ,  \frac{6}{\lambda} \| f - f_\ast\|_{\rm F} (4\pi r)^3 \Big\}^m  m! \cdot m^{ \eta m}.
\EEQ
For $g_{d+1}$, we need to consider two cases: one where $v$ is uniformly zero, and thus $g_{d+1}$ is zero as well, and one where $v$ is strictly positive, where $f-f_\ast$ is lower-bounded by~$\beta$, and we can apply bounds on derivatives of products. We thus get explicit bounds on all derivatives, from \eq{A} and \eq{H}:
\BEQ
\label{eq:BBB}
\|\nabla^m g_{d+1}\|_\infty  \leqslant     3 \beta^{1/2} \max\Big\{ \frac{275 }{\alpha\eta} , \frac{8 \pi r \| f - f_\ast\|_{\rm F}}{\beta} \Big\}^m  
m! \cdot m^{ \eta m}  .
\EEQ

We can now use Lemma~\ref{lemma:derdec} below (see proof in Appendix~\ref{app:proofderdec}) that relates the growth of derivatives to the (truncated) F-norm.
\begin{lemma}[From derivatives to Fourier decay]
\label{lemma:derdec}
Assume that $g: [0,1]^d \to \rb$ is~$C^\infty$ and such that for all $m \geqslant 0$, 
$
\| \nabla^m g \|_\infty  \leqslant  C \cdot B^m \cdot    m! \cdot \kappa(m)  ,
$
with $\kappa$ non-decreasing. 
Then, for $k \geqslant d+1$,
\BEAS
\| g\|_{\rm F} & \leqslant &  C \Big( 2 + \frac{d Bk}{ 2 \pi} \Big)^k \kappa(k)  \cdot  2 (2e)^{d-2}\\
\|  g \|_{{\rm F},s} & \leqslant & 
C \Big( 2 + \frac{d Bk}{ 2 \pi} \Big)^k \kappa(k)
 2 (2e)^{d-2} (s+1)^{d-k}. 
\EEAS
\end{lemma}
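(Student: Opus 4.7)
The plan is to bound each Fourier coefficient $\hat g(\omega)$ via integration by parts in one coordinate direction, then sum the resulting bounds over lattice shells $\{\|\omega\|_\infty = n\}$.

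For the pointwise estimate, fix $\omega \in \zb^d \setminus \{0\}$ and let $j^\ast \in \arg\max_j |\omega_j|$, so that $|\omega_{j^\ast}| = \|\omega\|_\infty$. Because $g$ is $1$-periodic and smooth, integrating by parts $k$ times in the $x_{j^\ast}$ direction gives
\[
\hat g(\omega) \;=\; \frac{1}{(2i\pi\omega_{j^\ast})^k}\,\widehat{\partial_{j^\ast}^{\,k} g}(\omega),
\qquad\text{hence}\qquad
|\hat g(\omega)| \;\leq\; \frac{\|\partial_{j^\ast}^{\,k} g\|_\infty}{(2\pi\|\omega\|_\infty)^k}.
\]
Since $\partial_{j^\ast}^{\,k} g(x) = \nabla^k g(x)[e_{j^\ast},\ldots,e_{j^\ast}]$ with $\|e_{j^\ast}\|_1 = 1$, the numerator is at most $\|\nabla^k g\|_\infty \leq C B^k k!\,\kappa(k)$. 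For $\omega = 0$, I use $|\hat g(0)| \leq \|g\|_\infty \leq C\kappa(0) \leq C\kappa(k)$.

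Next I count lattice points on shells: $N(n) := \#\{\omega \in \zb^d : \|\omega\|_\infty = n\} = (2n+1)^d - (2n-1)^d$, which telescopes to at most $2d(2n+1)^{d-1} \leq 2d \cdot 3^{d-1} n^{d-1}$ for $n \geq 1$. Summing over shells,
\[
\|g\|_{{\rm F},s} \;\leq\; \frac{C B^k k!\,\kappa(k)}{(2\pi)^k} \sum_{n > s} \frac{N(n)}{n^k}
\;\leq\; \frac{C B^k k!\,\kappa(k)}{(2\pi)^k}\cdot 2d\cdot 3^{d-1} \sum_{n > s} n^{\,d-1-k}.
\]
For $k \geq d+1$ the exponent $d-1-k \leq -2$, so comparing with the integral $\int_s^\infty x^{\,d-1-k}\,dx = (s+1)^{d-k}/(k-d)$ (up to a $+1$ shift) yields a convergent tail bounded by a constant depending only on $d$ times $(s+1)^{d-k}$. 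The full-norm bound $\|g\|_{\rm F}$ is obtained from the same computation at $s=0$ after adding the zero-frequency contribution.

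The final step is purely bookkeeping: apply the crude inequality $k! \leq k^k$ to convert $B^k k!/(2\pi)^k$ into $(Bk/(2\pi))^k$, and then absorb all remaining $d$-dependent factors (from $2d \cdot 3^{d-1}$, from $1/(k-d)$, from passing $\|\omega\|_\infty \geq \|\omega\|_1/d$ if one wishes a $d$-factor inside the parenthesis, and from the zero-frequency term) into the stated multiplicative constants $2(2e)^{d-2}$ and into the additive shift ``$2+$'' inside $(2 + dBk/(2\pi))^k$. The $k \geq d+1$ hypothesis is used exactly to ensure convergence of the tail integral. The conceptual content — integration by parts plus shell counting — is straightforward; the only real obstacle is choosing the (slightly lossy) bounds on $N(n)$, on $k!$, and on the integral so that the constants line up with the stated form, and verifying that the monotonicity of $\kappa$ allows one to pull $\kappa(k)$ out of every term uniformly.
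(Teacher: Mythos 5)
Your route is genuinely different from the paper's, and it is conceptually cleaner in one respect: by integrating by parts $k$ times in the single direction $j^\ast$ where $|\omega_{j^\ast}|=\|\omega\|_\infty$, you only need the bound $\|\partial_{j^\ast}^k g\|_\infty\leq\|\nabla^k g\|_\infty$, which is immediate since $\|e_{j^\ast}\|_1=1$. The paper instead expands $(2+\|\omega\|_1)^k$ by the multinomial theorem and must therefore bound arbitrary mixed partials $\partial_\alpha g$ from the definition of $\|\nabla^k g\|_\infty$ (which controls only diagonal directional derivatives $\nabla^k g[\delta,\dots,\delta]$); this forces the paper to invoke a separate polarization lemma (Lemma~\ref{lemma:polar}) that your approach bypasses entirely. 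Your pointwise estimate then decays as $\|\omega\|_\infty^{-k}$ rather than $(2+\|\omega\|_1)^{-k}$, and you compensate by counting $\ell_\infty$-shells instead of $\ell_1$-shells.

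Where I would push back is on the closing sentence that the remaining bookkeeping is ``straightforward.'' It is not, and you have not done it. Your sum over $\ell_\infty$-shells produces a prefactor of order $d\cdot 3^{d-1}$ in front of $(Bk/2\pi)^k\,k!/k^k$, whereas the target has $2(2e)^{d-2}$ in front of $\big(2+\tfrac{dBk}{2\pi}\big)^k$. These have a different combinatorial structure: the target carries a $d^k$ that your $\ell_\infty$ computation never produces. One does land on the stated constants for $d\geq 2$, but only after noticing that $\big(2+\tfrac{dBk}{2\pi}\big)^k\geq 2^k + d^k\big(\tfrac{Bk}{2\pi}\big)^k$ (so the $2^k$ term absorbs the $\omega=0$ contribution and the $d^k$ term dominates your $d\cdot 3^{d-1}\cdot k!/k^k$ prefactor, using $d^{k-1}\geq d^d$, $k!/k^k\leq (d+1)!/(d+1)^{d+1}$). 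For $d=1$ the arithmetic fails to reproduce the stated constant $2(2e)^{-1}$, though the same issue already appears in the paper's proof (the bound $\binom{d+t-1}{d-1}\leq \frac{2^{d-2}}{(d-1)!}t^{d-1}+(2e)^{d-2}$ used there also breaks down when $d=1$), so that particular corner case is not a defect specific to your argument. In short: the method is sound and attractive, but a complete proof must carry out the arithmetic rather than appeal to it as routine, and the observation $(2+dz)^k\geq 2^k+(dz)^k$ is the nontrivial step that makes the constants close.
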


With $\ds B = \max\Big\{ \frac{275 }{\alpha \eta} , \frac{8 \pi r \| f - f_\ast\|_{\rm F}}{\beta}, \frac{6}{\lambda} \| f - f_\ast\|_{\rm F} (4\pi r)^3 \Big\} \geqslant 275$, we get from Lemma~\ref{lemma:derdec} above, \eq{AAA}, and \eq{BBB}, for all $k \geqslant d+1$:
\BEAS
\| g_{d+1}\|_{\rm F}
& \leqslant &  3 \beta^{1/2} \Big( 2 + \frac{B d  (d+1)}{ 2 \pi} \Big)^{d+1} (d+1)^{\eta(d+1)}  \cdot  2 (2e)^{d-2}
\\
\| g_{d+1}\|_{\rm F, s }
& \leqslant &  3 \beta^{1/2} \Big( 2 + \frac{d Bk}{ 2 \pi} \Big)^{k} k^{\eta k}  \cdot   2 (2e)^{d-2} s^{d-k}   
\\
\| g_{i}\|_{\rm F}
& \leqslant &  d \sqrt{\lambda} \Big( 2 + \frac{Bd (d+1)}{ 2 \pi} \Big)^{d+1} (d+1)^{\eta(d+1)}  \cdot  2 (2e)^{d-2}
\\
\| g_{i}\|_{\rm F, s }
& \leqslant &  d \sqrt{\lambda} \Big( 2 + \frac{d Bk}{ 2 \pi} \Big)^{k} k^{\eta k}  \cdot   2 (2e)^{d-2} s^{d-k}  ,
\EEAS
and thus a bound from \eq{boundfinal}:
\BEAS 
c & \leqslant &8  ( 9 \beta +   \lambda d^3 )
\Big( 2 + \frac{Bd (d+1)}{ 2 \pi} \Big)^{d+1}
 \Big( 2 + \frac{d Bk}{ 2 \pi} \Big)^{k}
  (2e)^{2d-4} s^{d-k} k^{\eta k}
    \\
& \leqslant &     (  \beta +   \lambda d^3 )
\big( 6 Bd^2  \big)^{d+1}
 \Big( \frac{d Bk}{ 6} \Big)^{k}
   s^{d-k} k^{\eta k}
    .
\EEAS
The main term is of the form
$
 \big( \frac{ k^{1+\eta} d B}{6 s} \big)^k.
$
 We then select $k = \big( \frac{ 6 s}{ e d B} \big)^{1/(1+\eta)}$, leading to the term
\[
 \exp\big( -  \big( \frac{ 6 s}{ e d B} \big)^{1/(1+\eta)} \big) \leqslant  \exp\big( -  \big( \frac{ 2 s}{ d B} \big)^{1/(1+\eta)} \big) .
\]
Overall,using the identity $ e^{- z} \leqslant \big( \frac{c}{ez} \big)^c$, applied to $c = \frac{3d}{2} $ and
$z = \frac{2}{5} \big(\frac{s}{dB}\big)^{1/(1+\eta)}$,  and  multiplying the bound in \eq{boundfinal} above by the term $(2s+1)^{d}$ from Lemma~\ref{lemma:lemma42}, and using $\eta \in (0,1]$, we get:
\BEAS 
\varepsilon(f,s) & \leqslant &  (2s+1)^{d}  (  \beta +   \lambda d^3 )
\big( 6 Bd^2  \big)^{d+1}
  s^d 
   \exp\big( -  \big( \frac{ 2 s}{ d B} \big)^{1/(1+\eta)} \big) \\
    & \leqslant &     (  \beta +   \lambda d^3 )
\big( 9 Bd^2  \big)^{d+1}
  s^{3d/2} 
   \exp\big( -  \big( \frac{ 2 s}{ d B} \big)^{1/(1+\eta)} \big) \\
  & \leqslant &     (  \beta +   \lambda d^3 )
\big( 9 Bd^2  \big)^{d+1}
  s^{2d} 
   \exp\big( -  \big( \frac{   s}{ d B} \big)^{1/(1+\eta)} \big) 
    \exp\big( -  \frac{2}{5} \big( \frac{   s}{ d B} \big)^{1/(1+\eta)} \big) \\
  & \leqslant &   (  \beta +   \lambda d^3 )
\big( 9 Bd^2  \big)^{d+1}  s^{2d} 
\Big(  \frac{15}{4e}  (dB/s)^{1/(1+\eta)} \Big)^{2d}
   \exp\big( -  \big( \frac{  s}{ d B} \big)^{1/(1+\eta)} \big) 
   \\
     & \leqslant &   (  \beta +   \lambda d^3 )
\big( 9 Bd^2  \big)^{d+1}
\Big(   \frac{15}{4e} d^2 B \Big)^{2d}
   \exp\big( -  \big( \frac{  s}{ d B} \big)^{1/(1+\eta)} \big)\\
        & \leqslant &   (  \beta +   \lambda d^3 )
\big( 32 B^3 d^6  \big)^{d+1}
   \exp\big( -  \big( \frac{  s}{ d B} \big)^{1/(1+\eta)} \big).
\EEAS
We then consider $\xi = 1 - \frac{1}{1+\eta} \in (0,1/2]$ to obtain the constants in \eq{triangle}.

\section{Discussion}

Our convergence results could be extended in several ways:
\BIT
\item While explicit polynomial convergence rates already exist for the Boolean hypercube~\cite{slot2022sum}, it would be interesting to obtain improved rates with some form of local condition.

\item Our proof technique relies on Fourier series {and the characterization of various orders of differentiability using the corresponding orthonormal basis.} It could thus be extended to all cases where such tools can be used, such as on the Euclidean hypersphere~\cite{fang2021sum} and beyond~\cite{rudin2017fourier}.

\item Almost all the techniques that we used to derive explicit constants can be extended easily to the more general kernel case \cite{rudi2020finding} (noting that the function~$q$ that we used is a specific instance of a translation-invariant periodic kernel), as well as the case where minimizers are manifolds~\cite{ferey2022}.

\item It would be interesting to extend our second result to provide an explicit bound on the degree for finite convergence.

\item We only focused on the unconstrained global optimization problem, but adding constraints and extending to more general problems (e.g., optimal control and optimal transport) is natural.

\EIT

\subsection*{Acknowledgements}
We thank Monique Laurent, Jean-Bernard Lasserre, and Milan Korda for helpful discussions about this work. Comments of the anonymous reviewers were greatly appreciated. We acknowledge support
from the French government under the management of the Agence Nationale de la {Recher-che} as part of the ``Investissements d’avenir'' program, reference ANR-19-P3IA0001 (PRAIRIE 3IA Institute). This work was also supported by the European Research Council (grants SEQUOIA 724063 and REAL 947908).

\appendix

\section{Computation of convolutions}
\label{app:tedious}

{Given the function $b: \mathbb{Z} \to \rb$ defined as $b(\omega)  = ( s - | \omega |)_+$, we  need to compute the convolution $b\! \ast\! b(\omega)$ for $|\omega| \leqslant s$. Since $b$ is even, so is $b \!\ast\! b$ and we can thus consider $\omega \in [0,s]$. We want to show that $b \!\ast\! b(\omega) = \frac{s(2s^2+1)}{3}  - \frac{\omega}{2} - s \omega^2 + \frac{\omega^3}{2}$.}

{We can split the sum 
$b \!\ast\! b(\omega) = \sum_{i\in \mathbb{Z}} b(i) b(\omega - i) $ as follows,
\BEAS
&   & \sum_{i = \omega - s}^0 ( s + i) ( s - \omega + i)
+
\sum_{i= 1}^\omega ( s - i) ( s - \omega + i)
+
\sum_{i = \omega +1 }^s ( s  - i) ( s - i + \omega)
\\
& = & \sum_{i = \omega - s}^0 \!\! \big[ i^2 + (2s-\omega)i + s(s-\omega) \big]
+
\sum_{i = 1}^\omega \big[ - i^2 +\omega i + s(s-\omega) \big]
\\[-.35cm]
& &\hspace*{7cm}
+
\sum_{i = \omega +1 }^s \!\! \big[ i^2 - (2s+\omega)i + s(s+\omega) \big].
\EEAS
Then, 
using $\sum_{i=1}^t \! i = \frac{t(t+1)}{2}= \frac{1}{2}(t^2+t)$ and $\sum_{i=1}^t \! i^2 = \frac{t(t+1)(2t+1)}{6} = \frac{1}{6}(2 t^3+3t^2 + t)$, we get:
\BEAS
&  & \frac{1}{6}(s-\omega)(s-\omega+1) ( 2s-2\omega+1)
- \frac{1}{2} (s-\omega)(s-\omega+1) ( 2s-\omega)\\
& & + s(s-\omega)(s-\omega+1)   -  2 \frac{1}{6} \omega(\omega+1)( 2\omega+1)
+ \frac{1}{2} \omega(\omega+1)(2s+2\omega) + s(s-\omega) \omega \\
& & + \frac{1}{6} s(s+1)( 2s+1)
- \frac{1}{2} s(s +1) ( 2s+\omega) + s(s+\omega)(s-\omega),
\EEAS
leading to
\BEAS
& & \frac{1}{6} (s-\omega)(s-\omega+1) (  \omega+1 - 4s )
+ s(s-\omega) ( 2s +\omega + 1  ) \\
& & + \frac{1}{6}(2 s^3+3s^2 +s ) - \frac{1}{3}(2 \omega^3+3\omega^2 +\omega )
+( \omega^2+\omega)(s+\omega) - \frac{1}{2}(s^2+s)(2s+\omega) \\
& = &\big[   \frac{1}{6}   -\frac{2}{3} + 1\big] \omega^3 +\big[  \frac{1}{6}(  {1-4s-s-1-s}) -s - 1+ s+1\big]\omega^2  \\
& & +
\big[ \frac{1}{6}(  (s+1)(4s\!-\!1)-s(1\!-\!4s)+s(s\!+\!1)) + s^2 -s(2s\!+\!1) - \frac{1}{3}+s  - \frac{1}{2}(s^2\!+\! s) \big] \omega  \\
& & + \big[ 
s(s+1)\frac{1}{6}(1-4s)+s^2(2s+1)+\frac{1}{6}(2 s^3+3s^2 +s ) -s(s^2+s)
\big]
\\
& = &\frac{1}{2}\omega^3 - s \omega^2 - \frac{1}{2}\omega +\frac{s}{3} + \frac{2}{3}s^3. 
\EEAS
To get \eq{tedious}, we then use $\hat{q} \ast \hat{q}(\omega) = a^2 \prod_{i=1}^d \frac{1}{s^2} b \!\ast\! b(|\omega_i|)$.}

\section{Performance of the spectral relaxation}
\label{app:spectral}
Given a trigonometric polynomial $f$ of degree $2r$, with $r \leqslant s$, we  can represent it as a quadratic form in $\varphi(x)$ defined in \eq{phi} as:
\[
f(x) = \varphi(x)^\top F \varphi(x) \mbox{ with } 
F_{\omega \omega'} = \hat{f}(\omega-\omega') \prod_{i=1}^d \Big( 1 - \frac{|\omega_i - \omega_i'|}{2s+1} \Big)^{-1},
\]
which is the unique Toeplitz representation $F$ for $f$.
We denote by $g: [0,1]^d \to \rb$ the function with Fourier series $\hat{g}(\omega) = \hat{f}(\omega) \prod_{i=1}^d \big( 1 - \frac{|\omega_i|}{2s+1} \big)^{-1}$.

For any $z \in \cb^{(2s+1)^d}$ of unit norm, we have:
\BEAS
z^\ast F z
& = & \sum_{\| \omega \|_\infty, \| \omega' \|_\infty \leqslant s } z_\omega z_{\omega'}^\ast 
\int_{[0,1]^d} g(x) \exp( -2i\pi (\omega-\omega')^\top x )dx \\[-.1cm]
& = & \int_{[0,1]^d} g(x) \bigg|
 \sum_{\| \omega \|_\infty  \leqslant s }
 z_\omega   \exp( -2i\pi  \omega ^\top x )
\bigg|^2 dx \\[-.1cm]
& \geqslant  &   \inf_{x' \in [0,1]^d} g(x') \cdot \int_{[0,1]^d}  \bigg|
 \sum_{\| \omega \|_\infty  \leqslant s }
 z_\omega   \exp( -2i\pi  \omega ^\top x )
\bigg|^2 dx =  \inf_{x' \in [0,1]^d} g(x').
\EEAS
Thus $\lambda_{\min}(F) \geqslant  \inf_{x \in [0,1]^d} g(x) $. We have moreover:
\BEAS
\| f - g\|_\infty & \leqslant &   \sum_{\omega \in \zb^d}
 |\hat{f}(\omega)| \cdot \Big| \prod_{i=1}^d \big( 1 - \frac{|\omega_i|}{2s+1} \big)^{-1} - 1 \Big|  \\[-.1cm]
 & \leqslant & \| f - \bar{f}\|_{\rm F}\Big[ \Big(  1 - \frac{2r}{2s+1}\Big)^{-d} - 1\Big] \sim_{s \to +\infty}
 \| f - \bar{f} \|_{\rm F} \cdot \frac{ rd}{s},
\EEAS
which leads to 
 \[
 0 \geqslant \lambda_{\min}(F) -f_\ast \geqslant  - 
  \| f - \bar{f}\|_{\rm F}\Big[ \Big(  1 - \frac{2r}{2s+1}\Big)^{-d} - 1\Big] \sim_{s \to +\infty}
 - \| f - \bar{f} \|_{\rm F} \cdot \frac{ rd}{s}.
 \]

 \section{Proof of corollary~\ref{cor:hausd}}
 \label{app:hausd}
  {For $\| \tau\|_\infty \leqslant 2s$, let $\Omega(\tau)$ denote the set of $(\omega,\omega') \in \zb^d\times \zb^d$ such that $\|\omega\|_\infty \leqslant r$, $\| \omega'\|_\infty \leqslant r$, and $\omega-\omega' = \tau$.
 We consider the norm $\Theta$ on the set of Hermitian matrices of dimension $(2r+1)^d$ defined as: 
 \BEAS
 \Theta(\Sigma) = (2r+1)^d \sum_{\| \tau \|_\infty \leqslant 2r} \bigg(
 |\Omega(\tau)|^{-1}\!\!\!\!
 \sum_{(\omega,\omega') \in \Omega(\tau)} \!\!\!\!\!
| \Sigma_{\omega \omega'}|^2 \bigg)^{1/2}.
 \EEAS
 This norm is constructed so that, for a trigonometric polynomial $f$ of degree less than $2r$ represented by a Hermitian matrix $F$, then $\| f \|_{\rm F} = \inf_{Y \in \mathcal{V}_r^\perp} \Theta^\ast( F + Y)$ (taking into account the normalizing factor defining $\varphi$). Thus, 
we have, applying Theorem~\ref{theo:nocond}:
 \BEAS
\inf_{\Sigma' \in \mathcal{K}_s}  \Theta( {\Pi}_{s}^{(r)} \big( \Sigma - \Sigma'))
 & = & \inf_{\Sigma' \in \mathcal{K}_s} \sup_{\Theta^\ast(F) \leqslant 1}
 \tr [ F  {\Pi}_{s}^{(r)} \big( \Sigma - \Sigma')] \\[-.1cm] 
 & = & \sup_{\Theta^\ast(F) \leqslant 1}
  \tr [ F  {\Pi}_{s}^{(r)}   \Sigma ]
  + 
 \inf_{\Sigma' \in \mathcal{K}_s} 
 \tr [- F  {\Pi}_{s}^{(r)}   \Sigma' ]
\\[-.1cm] 
& \leqslant & \! \sup_{\Theta^\ast(F) \leqslant 1}
  \tr [ F  {\Pi}_{s}^{(r)}   \Sigma ]
  + 
 \inf_{\Sigma' \in \widehat{\mathcal{K}}_s} 
 \tr [- F  {\Pi}_{s}^{(r)}   \Sigma' ] \! +\! \Big[  \Big( 1\! -\! \frac{6r^2}{s^2} \Big)^{-d} \!-\! 1 \Big] \\[-.1cm] 
 & \leqslant & \Big[  \Big( 1\! -\! \frac{6r^2}{s^2} \Big)^{-d} \!-\! 1 \Big],
 \EEAS
 by selecting $\Sigma' = \Sigma$ in the bound above. The bound using the Frobenius norm is obtained by computing a lower bound on $\Theta^\ast$ as done in Appendix~\ref{app:techlemma} below (but applying to $r$ instead of $s$.}
 
 \section{Proof of Lemma~\ref{lemma:lemma42}}
 \label{app:techlemma}
 \begin{proof}
{Assuming $f_\ast = 0$ without loss of generality, let $f$ be represented by the Hermitian matrix $F$, and $g$ by the PSD Hermitian matrix $G$, that is, for all $x \in [0,1]^d$, $f(x) = \varphi(x)^\ast H \varphi(x)$ and $g(x) = \varphi(x)^\ast G \varphi(x)$. For $\| \tau\|_\infty \leqslant 2s$, if $\Omega(\tau)$ is the set of $(\omega,\omega') \in \zb^d\times \zb^d$ such that $\|\omega\|_\infty \leqslant s$, $\| \omega'\|_\infty \leqslant s$, and $\omega-\omega' = \tau$, then, using that the space $\mathcal{V}_s$ of Hermitian Toeplitz matrices $H$ is characterized by equal values for $H_{\omega\omega'}$ for $(\omega,\omega') \in \Omega(\tau)$ for each $\tau$,
\BEAS
 \| f - g\|_{\rm F} & = & \!\! \! \sum_{\|\omega\|_\infty \leqslant 2s} \!\!\!\!| \hat{f}(\omega) - \hat{g}(\omega)|  \\[-.15cm]
 & = & \frac{1}{(2s+1)^d}
 \inf_{Y \in \mathcal{V}_s^\perp} \!\! \sum_{\| \tau\|_\infty \leqslant 2s} \!\! \bigg(  | \Omega(\tau)| \!\!\sum_{(\omega,\omega') \in \Omega(\tau)} \!\!\!\!\!
| ( F - G + Y)_{\omega \omega'}|^2 \bigg)^{1/2}
\\
& \geqslant & \inf_{Y \in \mathcal{V}_s^\perp} 
\frac{\min_{\| \tau\|_\infty \leqslant 2s} | \Omega(\tau)|^{1/2}}{(2s+1)^d}
 \bigg( \sum_{\| \tau\|_\infty \leqslant 2s}  \sum_{(\omega,\omega') \in \Omega(\tau)} \!\!\!\!\!
| ( F - G + Y)_{\omega \omega'}|^2 \bigg)^{1/2}
\\
& =  & \frac{\sqrt{2}}{(2s+1)^d}
  \inf_{Y \in \mathcal{V}_s^\perp} 
 \| F - G + Y \|_{\rm Frob} 
 \geqslant \frac{\sqrt{2}}{(2s+1)^d} \inf_{Y \in \mathcal{V}_s^\perp} 
 \| F - G + Y \|_{\rm op}.
\EEAS
We can then take the maximizer above $Y \in  \mathcal{V}_s^\perp$, and we have
\BEAS
c_\ast(f,s) \geqslant \lambda_{\min} ( F + Y)
\geqslant \lambda_{\min} ( G ) - \| F - G + Y \|_{\rm op}
\geqslant \textstyle 0 -\frac{(2s+1)^d}{\sqrt{2}} \varepsilon'(s,f),
\EEAS
where $\| \cdot\|_{\rm Frob}$ denotes the Frobenius norm and   $\| \cdot\|_{\rm op}$  the largest singular value.
}
\end{proof}

\section{Proof of generic lemmas about derivatives}
\label{app:A}
 
 In this appendix, we prove lemmas about derivatives and Fourier decays. 
\subsection{Proof of Lemma~\ref{lemma:derdec}}
\label{app:proofderdec}

 \begin{proof}
{We will show a bound on the Fourier series of  ${g}$} of the form
\BEQ
\label{eq:DK}
| \hat{g}(\omega)| \leqslant  D(k)  \frac{1}{(2+ \| \omega\|_1 )^k},
\EEQ
for a constant $D(k)$ to be determined,
since it implies, for $k\geqslant d + 1$:
\[
\sum_{\| \omega \|_\infty \geqslant s} | \hat{g}(\omega)|   \leqslant    D(k) \sum_{\omega \in \zb^d} \frac{1}{(2+ \| \omega\|_1 )^k} =    D(k) \sum_{t=s}^\infty   \frac{1}{(2+ t )^k} { d + t-1 \choose d-1} ,
\]
by counting the number of $\omega \in \zb^d$ such that $\|\omega\|_1 = t$. This leads to the desired results (in particular by taking $s=0$).

We first start by a simple upper bound on ${ d + t-1 \choose d-1}$, as (using the identity $n^n \leqslant n! e^{n-1}$ applied to $n=d-1$):
\BEAS
{ d + t-1 \choose d-1}
& = & \frac{1}{(d-1)!}(t+1) \cdots (t+d-1) \leqslant \frac{(t+d-1)^{d-1}}{(d-1)!} \\
& \leqslant & 2^{d-2} \frac{t^{d-1}+(d-1)^{d-1}}{(d-1)!} \leqslant \frac{2^{d-2}}{(d-1)!} t^{d-1} + (2e)^{d-2}.
\EEAS
This leads to:
\BEAS
\sum_{\omega \in \zb^d} | \hat{g}(\omega)| & \leqslant & 
D(k) \sum_{t=0}^\infty   \frac{1}{(2+  t )^k}  \Big( \frac{2^{d-2}}{(d-1)!}   t^{d-1} + (2e)^{d-2} \Big)
\\
& \leqslant & 
D(k)  \Big[ \frac{2^{d-2}}{(d-1)!}  \frac{1}{k-d} + (2e)^{d-2} \frac{1}{k-1} \Big] \Big)
 \\
 & \leqslant  & D(k)  \Big[ \frac{2^{d-2}}{(d-1)!}  + \frac{(2e)^{d-2}}{d}   \Big] \Big) \leqslant 2 (2e)^{d-2}    D(k) .
\\
 \sum_{\| \omega \|_\infty \geqslant s} | \hat{g}(\omega)|
 & \leqslant &  D(k) \sum_{t=s}^\infty  \frac{1}{(2+  t )^k}  \Big( \frac{2^{d-2}}{(d-1)!}   t^{d-1} + (2e)^{d-2} \Big)
\\[-.2cm]
& \leqslant &  D(k)  \Big[ \frac{2^{d-2}}{(d-1)!}  \frac{1}{(s+1)^{k-d}} + \frac{(2e)^{d-2}}{d}  \frac{1}{(s+1)^{k-1}}   \Big] \\
& \leqslant   & 
 2 (2e)^{d-2} (s+1)^{d-k} D(k).\EEAS
 
 \paragraph{Proof of \eq{DK}}
To obtain \eq{DK}, we need to be able to bound the product  $| \hat{g}(\omega)| | \omega_j|^{\alpha_1} \cdots | \omega_d|^{\alpha_d}$ for any $\alpha$ such that $\alpha_1+\cdots+ \alpha_d = k$. {For this, we need uniform bounds on all partial derivatives, which we need to obtain from bounds on $
\nabla^k g(x) [ \delta,\dots, \delta ]
$
for all $\delta$ and $k$.
From the polarization Lemma~\ref{lemma:polar}, we have
\BEAS
|\nabla^k g(x) [ \delta_1,\dots,\delta_k]| 
\leqslant \frac{1}{k!}  \Big(\sum_{i=1}^k \| \delta_i\|_1 \Big)^k \cdot \| \nabla^k  g\|_\infty
\leqslant  \frac{1}{k!}  \Big(\sum_{i=1}^k \| \delta_i\|_1 \Big)^k  C \cdot B^k \cdot    k! \cdot \kappa(k),
\EEAS
by definition of $\| \nabla^k g\|_\infty$ and because of the assumptions of the lemma. For any $\alpha$ such that $\alpha_1+\cdots+ \alpha_d = k$, the partial derivative
$\partial_\alpha g(x) = \frac{\partial^k g}{\partial x_1 ^{\alpha_1} \cdots x_d^{\alpha_d}}(x)$ can be written as
$\frac{\partial^k g}{\partial x_{j_1}  \cdots \partial x_{j_k}}(x)$ for $j_1,\dots,j_k \in \{1,\dots,d\}$. Thus, applying the inequality above with~$\delta_i $ the indicator vector of the set $\{ j_i\}$ for each $i \in \{1,\dots,k\}$, we get}
\[
 | \partial_\alpha g(x)| \leqslant |\nabla^k g(x) [ \delta_1,\dots,\delta_k]| 
\leqslant   C \cdot B^k \cdot    k^k \cdot \kappa(k).
\]
Then, by expanding $(2+\|\omega\|_1)^k$ with the multinomial formula, and using the bound
$\hat{g}(\omega) \prod_{i=1}^d |2 \pi \omega_i|^{\alpha_i} 
\leqslant \sup_{x \in [0,1]^d} | \partial_\alpha g(x)|$, we get:
\BEAS
& & | \hat{g}(\omega)| \sum_{\|\alpha\|_1 = k} \frac{k!}{\alpha_0! \alpha_1! \cdots \alpha_d!} 2^{\alpha_0} | \omega_j|^{\alpha_1} \cdots |  \omega_d|^{\alpha_d} \\[-.1cm]
& \leqslant & \sum_{\|\alpha\|_1 = k} \frac{k!}{\alpha_0! \alpha_1! \cdots \alpha_d!} 2^{\alpha_0} C \Big(\frac{B}{2\pi} \Big)^{k-\alpha_0} k^{k- \alpha_0} \kappa(k)  \leqslant   C \Big( 2 + \frac{d Bk}{ 2 \pi} \Big)^k \kappa(k) .\EEAS

This leads to $\ds D(k) \leqslant C  \Big( 2 + \frac{d Bk}{ 2 \pi} \Big)^k \kappa(k)$, and thus the desired result.
\end{proof}

\begin{lemma}[Polarization]
\label{lemma:polar}
Let $u:E^m \to \rb $ be a symmetric $m$-multi-linear form on some normed vector space $E$. Then for all $z_1,\dots,z_m \in E$, we have:  
\[
|u[ z_1,\dots,z_m]|  \leqslant 
\frac{1}{m!}  \Big(\sum_{i=1}^m \| z_i\|_1 \Big)^m \cdot \sup_{\|z\|_1\leqslant 1} u(z,\dots,z).
\]
\end{lemma}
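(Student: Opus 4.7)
The plan is to deduce the bound from the classical polarization identity combined with multilinear homogeneity and the triangle inequality. Set $M := \sup_{\|z\|_1 \leqslant 1} |u(z,\ldots,z)|$; this agrees with the right-hand side of the lemma for $m$ even (where $u(-z,\ldots,-z) = u(z,\ldots,z)$), and for $m$ odd the identity $u(-z,\ldots,-z) = -u(z,\ldots,z)$ shows that the supremum of $u(z,\ldots,z)$ over the unit ball coincides with $M$, so the distinction is immaterial. The main step is to establish the polarization identity
\[
u(z_1,\ldots,z_m) \;=\; \frac{1}{m!\,2^m} \sum_{\epsilon \in \{-1,+1\}^m} \epsilon_1 \cdots \epsilon_m \; u\Bigl(\textstyle\sum_{i=1}^m \epsilon_i z_i,\,\ldots,\,\sum_{i=1}^m \epsilon_i z_i\Bigr),
\]
and then bound each diagonal term individually.

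To prove the identity I would expand the inner diagonal form by multilinearity as
\[
u\Bigl(\textstyle\sum_i \epsilon_i z_i,\ldots,\sum_i \epsilon_i z_i\Bigr) = \sum_{j_1,\ldots,j_m=1}^m \epsilon_{j_1}\cdots\epsilon_{j_m}\, u(z_{j_1},\ldots,z_{j_m}),
\]
and observe that $\sum_{\epsilon \in \{-1,+1\}^m} \epsilon_1 \cdots \epsilon_m\,\epsilon_{j_1}\cdots\epsilon_{j_m}$ equals $2^m$ when $(j_1,\ldots,j_m)$ is a permutation of $(1,\ldots,m)$ and vanishes otherwise (any index appearing with odd total multiplicity makes the sum zero by parity in that coordinate). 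Collecting the $m!$ contributions from the permutations and using the symmetry of $u$ produces the single term $m!\,u(z_1,\ldots,z_m)$, yielding the identity after dividing by $m!\,2^m$.

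To finish I would use $m$-homogeneity of the diagonal map $w \mapsto u(w,\ldots,w)$ to write $|u(w,\ldots,w)| \leqslant \|w\|_1^m \cdot M$ for every $w \in E$. Applying this with $w = \sum_i \epsilon_i z_i$ and invoking the triangle inequality $\|\sum_i \epsilon_i z_i\|_1 \leqslant \sum_i \|z_i\|_1$, each of the $2^m$ signed terms in the polarization identity is bounded in absolute value by $M\bigl(\sum_i \|z_i\|_1\bigr)^m$. Summing and dividing by $m!\,2^m$ yields exactly the claimed inequality. There is no substantive obstacle; the only mild subtlety is the sign consideration relating $\sup u(z,\ldots,z)$ to $\sup |u(z,\ldots,z)|$, which is handled once and for all by the parity argument above, and the proof requires no structural hypothesis on $E$ beyond being a normed space.
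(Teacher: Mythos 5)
Your argument follows essentially the same route as the paper: invoke the $m$-linear polarization identity to express $u(z_1,\dots,z_m)$ as a signed average of diagonal evaluations, then bound each diagonal term via $m$-homogeneity and the triangle inequality. The paper cites \cite{thomas2014polarization} for the identity (in the equivalent $\{0,1\}^m$ parametrization, with $\epsilon_i = (-1)^{\varepsilon_i}$), whereas you prove it from scratch by expanding by multilinearity and using parity orthogonality of the sign characters. Your orthogonality argument is correct: the sum $\sum_\epsilon \epsilon_1\cdots\epsilon_m\,\epsilon_{j_1}\cdots\epsilon_{j_m}$ factors coordinate-wise, each coordinate contributes $2$ if $\epsilon_k$ appears with even total power and $0$ otherwise, and the pigeonhole forces each $k$ to appear exactly once among $(j_1,\dots,j_m)$ in the nonvanishing case. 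That is a small pedagogical improvement over the paper, which leaves the identity as a citation.

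One aside in your proof has the parities backwards, though it does not affect the main derivation. You claim that for $m$ \emph{even}, $\sup_{\|z\|_1\leqslant 1} u(z,\dots,z) = \sup_{\|z\|_1\leqslant 1} |u(z,\dots,z)|$ because $u(-z,\dots,-z)=u(z,\dots,z)$. In fact that identity gives you nothing: for $m$ even and, say, $u(z_1,z_2)=-z_1 z_2$ in one dimension, $\sup u(z,z)=0$ while $\sup|u(z,z)|=1$, so the two suprema can genuinely differ. It is for $m$ \emph{odd} that the relation $u(-z,\dots,-z)=-u(z,\dots,z)$ makes the set $\{u(z,\dots,z):\|z\|_1\leqslant 1\}$ symmetric about $0$ and hence forces $\sup u = \sup |u|$. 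What you actually prove (and what the paper's own displayed proof also produces) is the inequality with $\sup|u(z,\dots,z)|$ on the right-hand side, which is the version that is unconditionally true; the lemma as literally stated, with $\sup u(z,\dots,z)$ and no absolute value, is imprecise for even $m$. In the paper's application the diagonal quantity is already fed through a modulus when defining $\|\nabla^m f\|_\infty$, so nothing breaks downstream, but your parenthetical explanation of why the two suprema coincide should be corrected.
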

\begin{proof}
We use the polarization identity for the $m$-multilinear form $u: E^m \to E$ and its diagonal $\tilde{u}: z \mapsto u(z,\dots,z)$, see~\cite[Eq.~(A.4)]{thomas2014polarization},
\[
u(z_1,\dots,z_m) = \frac{1}{2^m m!} \sum_{ \varepsilon \in \{0,1\}^m} (-1)^{  \| \varepsilon\|_1} \tilde{u} \Big( \sum_{i=1}^m (-1)^{\varepsilon_i} z_i \Big),
\]
which leads to
\BEAS
| u(z_1,\dots,z_m) | & \leqslant & \frac{1}{2^m m!} \sum_{ \varepsilon \in \{0,1\}^m} \Big(\sum_{i=1}^m \| z_i\|_1 \Big)^m \sup_{\| z \|_1 \leqslant 1} | \tilde{u}(z)|
\\[-.2cm]
& =&  \frac{1}{m!}  \Big(\sum_{i=1}^m \| z_i\|_1 \Big)^m  \sup_{\| z \|_1 \leqslant 1} | \tilde{u}(z)|,
\EEAS
which is the desired result.
\end{proof}

\subsection{Proof of Lemma~\ref{lemma:prod}}
\label{app:proofder}
\begin{proof}
Using Leibniz formula applied to $\varphi_1(t) = h_1(x+t \delta)$,  $\varphi_2(t) = h_2(x+t \delta)$, we have:
\BEAS
(\varphi_1 \varphi_2)^{(m)}(0) & \!\!=\!\! &   \textstyle \sum_{i=0}^m { m \choose i} \varphi_1^{(i)}(0)  \varphi_2^{(m-i)}(0) \\ 
& \!\!\leqslant \!\!& \textstyle C_1 C_2  \|\delta\|_1^m \sum_{i=0}^m { m \choose i} B_1^i B_2^{m-i} i! (m-i)! \kappa_1(i) \kappa_2(m-i) 
\\
&\!\! \leqslant \!\!&  \textstyle  C_1 C_2 \kappa_1(m) \kappa_2(m)   \|\delta\|_1^m m! \sum_{i=0}^m B_1^i B_2^{m-i}
\\
&  \leqslant &  C_1 C_2 \kappa_1(m) \kappa_2(m)   \|\delta\|_1^m (m+1)! \max \{ B_1,B_2\}^m .
\EEAS
\end{proof}

\bibliography{sos_exp}

  \end{document}